\newcommand{\F}{\mathbb{F}}
\newcommand{\Irr}{\mathrm{Irr}}
\newcommand{\qb}[2]{{\left [{#1 \atop #2} \right]}}
\newlength{\standardunitlength}
\newtheorem{prop}{Proposition}[section]
\newtheorem{lemma}[prop]{Lemma}
\newtheorem{cor}[prop]{Corollary}
\newtheorem{theorem}[prop]{Theorem}
\begin{document}

\title [Generating functions for real character degree sums] {Generating functions for real character degree sums of finite general linear and unitary groups}

\author{Jason Fulman}
\address{Department of Mathematics\\
        University of Southern California\\
        Los Angeles, CA, 90089}
\email{fulman@usc.edu}

\author{C. Ryan Vinroot}
\address{Department of Mathematics\\College of William and Mary\\ Williamsburg, VA, 23187}
\email{vinroot@math.wm.edu}

\keywords{Frobenius-Schur indicator, real-valued characters, involutions, finite general linear group, finite unitary group, generating functions, $q$-series, Hall-Littlewood polynomial}

\thanks{{\it Date}: May 31, 2013}

\begin{abstract} We compute generating functions for the sum of the real-valued character degrees of the finite general linear and unitary groups, through symmetric function computations.  For the finite general linear group, we get a new combinatorial proof that every real-valued character has Frobenius-Schur indicator 1, and we obtain some $q$-series identities.  For the finite unitary group, we expand the generating function in terms of values of Hall-Littlewood functions, and we obtain combinatorial expressions for the character degree sums of real-valued characters with Frobenius-Schur indicator $1$ or $-1$.\\
\\
2010 {\it AMS Mathematics Subject Classification}:  20C33, 05A15
\end{abstract}

\maketitle

\section{Introduction}

Suppose that $G$ is a finite group, $\Irr(G)$ is the set of irreducible complex characters of $G$, and $\chi \in \Irr(G)$ affords the representation $(\pi, V)$.  Recall that the \emph{Frobenius-Schur indicator} of $\chi$ (or of $\pi$), denoted $\varepsilon(\chi)$, takes only the values $1$, $-1$, or $0$, where $\varepsilon(\chi) = \pm 1$ if and only if $\chi$ is real-valued, and $\varepsilon(\chi) = 1$ if and only if $(\pi, V)$ is a real representation \cite[Theorem 4.5]{Isa}.  From the formula $\varepsilon(\chi) = (1/|G|) \sum_{g \in G} \chi(g^2)$ \cite[Lemma 4.4]{Isa}, it follows that we have \cite[Corollary 4.6]{Isa}
\begin{equation} \label{invol}
\sum_{\chi \in \Irr(G)} \varepsilon(\chi) \chi(1) = \sum_{\chi \in \Irr(G) \atop{ \varepsilon(\chi) = 1}} \chi(1)- \sum_{\chi \in \Irr(G) \atop {\varepsilon(\chi) = -1}} \chi(1)= \# \{ h \in G \, \mid \, h^2 = 1 \}.
\end{equation}
In particular, from (\ref{invol}) it follows that the statement that $\varepsilon(\chi) = 1$ for every real-valued $\chi \in \Irr(G)$ is equivalent to the statement that the sum of the degrees of all real-valued $\chi \in \Irr(G)$ is equal to the right side of (\ref{invol}).

The main topic of this paper is to study sums of degrees of real-valued characters from a combinatorial point of view of the general linear and unitary groups defined over a finite field $\F_q$ with $q$ elements, which we denote by $GL(n,q)$ and $U(n,q)$, respectively.

Before addressing the main question of the paper, we begin in Section \ref{Weyl} by examining the classical Weyl groups.  In particular, we consider the Weyl groups of type $A$ (symmetric groups), type $B$/$C$ (hyperoctahedral groups), and type $D$.  It is well known that every complex irreducible representation of every finite Coxeter group is defined over the real numbers, and moreover, it was proved using a unified method by Springer that all complex representations of Weyl groups are defined over $\mathbb{Q}$ \cite{Sp}.  We consider the classical Weyl groups, however, as they serve as natural examples to demonstrate the method of generating functions and symmetric function identities to calculate the (real) character degree sum of a group.

In Section \ref{GLnSum}, we concentrate on the real character degree sum of $GL(n, q)$.  It is in fact known that $\varepsilon(\chi) = 1$ for every real-valued $\chi \in \Irr(GL(n,q))$.  This was first proved for $q$ odd by Ohmori \cite{Ohm77}, and it follows for all $q$ by a result of Zelevinsky \cite[Proposition 12.6]{Zel}.  So the real character degree sum is known to be the number of elements of $GL(n,q)$ which square to 1.  We obtain this result independently by calculating the sum of the degrees of the real-valued characters through symmetric function computations, and applying $q$-series identities.  We also obtain some $q$-series identities in the process.  In Theorem \ref{genfnGL}, we give a generating function for the real character degree sum of $GL(n,q)$ from symmetric function calculations.  In Theorem \ref{even}, we recover Zelevinsky's result on the Frobenius-Schur indicators of characters of $GL(n,q)$ in the case that $q$ is even, and we record the corresponding $q$-series identity in Corollary \ref{iden}.  We recover Zelevinsky's result for the case that $q$ is odd in Theorem \ref{odd}, where the calculation is a bit more involved than in the case that $q$ is even.  The resulting $q$-series identity, in Corollary \ref{cort}, seems to be an interesting result in its own right.

In Section \ref{unitary}, we turn to the problem of calculating the real character degree sum for the finite unitary group $U(n,q)$.  The main motivation here is that given a real-valued $\chi \in \Irr(U(n,q))$, it is unknown in general whether $\varepsilon(\chi) = 1$ or $\varepsilon(\chi) = -1$, although the values of $\varepsilon(\chi)$ are known for certain subsets of characters of $U(n,q)$, such as the unipotent characters \cite{Ohm} and the regular and semisimple characters \cite{SV}.  Unlike the $GL(n,q)$ case, it is known that there are $\chi \in \Irr(U(n,q))$ such that $\varepsilon(\chi) = -1$.  Using symmetric function techniques similar to the $GL(n,q)$ case, we compute a generating function for the real character degree sum of $U(n,q)$ in Theorem \ref{degreesU}.  The point here is that from Equation (\ref{invol}), by counting the number of elements in $U(n,q)$ which square to 1, we have the difference of the real character degree sums of those $\chi$ such that $\varepsilon(\chi) = 1$, minus those $\chi$ such that $\varepsilon(\chi)=-1$.  Using the generating function in Theorem \ref{degreesU} for the sum (rather than the difference) of these character degree sums, along with applying the $q$-series identities obtained in the $GL(n,q)$ case, we obtain a generating function for the character degree sums of $\chi \in \Irr(U(n,q))$ satisfying $\varepsilon(\chi) = 1$, and another for those satisfying $\varepsilon(\chi) = -1$, in Corollaries \ref{GenFnEpEven} and \ref{GenFnEpOdd}.

We are then able to expand the generating function obtained in Theorem \ref{degreesU}, using results of Warnaar \cite{War}, stated in Theorem \ref{WarId} and Corollary \ref{WarCor}.  For $q$ even, we give the resulting expression for the sum of the real character degrees of $U(n,q)$ in Theorem \ref{UnSumEven}, and for $q$ odd, in Theorem \ref{UnSumOdd}.  These expressions contain, among other things, special values of Hall-Littlewood functions of the form $P_{\lambda}(1, t, t^2, \ldots; -t)$, where $t = -q^{-1}$.  Since these values do not seem to be well understood, the expressions we obtain are somewhat complicated in terms of calculation, but we compute several examples to verify the expressions for small $n$.  The fact that these values of Hall-Littlewood functions show up in representation theory gives motivation to better understand them, and the fact that the expressions we obtain are complicated may reflect the fact that it has been a difficult problem to understand the Frobenius-Schur indicators of the characters of $U(n,q)$.  We hope that a better understanding of the combinatorial expressions we obtain in Theorems \ref{UnSumEven} and \ref{UnSumOdd} will reveal interesting character-theoretic information for the finite unitary groups.

\section{Examples:  Classical Weyl groups} \label{Weyl}

As was mentioned above, it is well understood that every complex irreducible character $\chi$ of a finite Coxeter group satisfies $\varepsilon(\chi) = 1$.  To motivate the methods we will use for the groups $GL(n,q)$ and $U(n,q)$, we consider this fact for the classical Weyl groups, from the perspective of generating functions and Schur function identities.

\subsection{Symmetric groups}  It is well known (see \cite{CHM}, for example) that the number of elements which square to the identity in the symmetric group $S_n$ (the Weyl group of type $A_{n-1}$) is exactly $n!$ times the coefficient of $u^n$ in $e^{u + u^2/2}$.

On the other hand, the irreducible complex representations of $S_n$ are parameterized by partitions $\lambda$ of $n$, and if $d_{\lambda}$ is the degree of the irreducible character $\chi_{\lambda}$ labeled by the partition $\lambda$ of $n$, then $d(\lambda)$ is given by the hook-length formula, $d_{\lambda} = n!/\prod_{b \in \lambda} h(b)$, where $h(b)$ is the hook length of a box $b$ in the diagram for $\lambda$.  By \cite[I.3, Example 5]{Mac}, we have
$$ d_{\lambda} = \frac{n!}{\prod_{b \in \lambda} h(b)} = n! \cdot \lim_{m \rightarrow \infty} s_{\lambda}(1/m, \ldots, 1/m),$$
where $s_{\lambda} (1/m, \ldots, 1/m)$ is the Schur function in $m$ variables, evaluated at $1/m$ for each variable.  The Schur symmetric functions are studied extensively in \cite[Chapter I]{Mac}.  Using the identity \cite[I.5, Example 4]{Mac}
\begin{equation} \label{Schur1}
\sum_{\lambda} s_{\lambda}(x) = \prod_i (1 - x_i)^{-1} \prod_{i < j} (1 - x_i x_j)^{-1},
\end{equation}
we may compute that the sum $\sum_{|\lambda|=n} d_{\lambda}$ is $n!$ times the coefficient of $u^n$ in
\begin{align*}
\lim_{m \rightarrow \infty} \sum_{\lambda} s_{\lambda}(u/m, \ldots, u/m)  & = \lim_{m \rightarrow \infty} (1 - u/m)^{-m} (1-u^2/m^2)^{-\binom{m}{2}} \\
 & = e^{u + u^2/2}.
\end{align*}
It follows that the sum of the degrees of the irreducible characters of $S_n$ is equal to the number of elements in $S_n$ which square to the identity.  From the Frobenius-Schur theory, this is equivalent to $\varepsilon(\chi) = 1$ for every complex irreducible character $\chi$ of $S_n$.

\subsection{Hyperoctahedral groups}  The hyperoctahedral group $(\mathbb{Z}/2\mathbb{Z}) \wr S_n$ is isomorphic to the Weyl group of type $B_n$ (or type $C_n$), which we denote by $W(B_n)$.  It follows from \cite[Theorem 2]{Ch} that the number of elements in this group with square the identity is $n!$ times the coefficient of $u^n$ in $e^{2u + u^2}$.

From \cite[I.9]{Mac}, the complex irreducible characters of $W(B_n)$ can be usefully parameterized by ordered pairs $(\lambda, \tau)$ of partitions such that $|\lambda| + |\tau| = n$.  If $d_{\lambda, \tau}$ is the degree of the irreducible character labeled by the pair $(\lambda, \tau)$, then by \cite[I.9, Equation (9.6)]{Mac} we have
$$ d_{\lambda, \tau} = \frac{n!}{\prod_{b \in \lambda} h(b) \prod_{b \in \tau} h(b)}.$$
Using the same Schur polynomial identity (\ref{Schur1}) as we did for the symmetric groups, we may compute the sum $\sum_{|\lambda| + |\tau| = n} d_{\lambda, \tau}$ to be $n!$ times the coefficient of $u^n$ in
\begin{align*}
\sum_{\lambda} \frac{u^{|\lambda|}}{\prod_{b \in \lambda} h(b)}  &\sum_{\tau} \frac{u^{|\tau|}}{\prod_{b \in \tau} h(b)}  \\
& = \lim_{m \rightarrow \infty} \sum_{\lambda} s_{\lambda}(u/m, \ldots, u/m) \sum_{\tau} s_{\tau} (u/m, \ldots, u/m) \\
& = \lim_{m \rightarrow \infty} (1 - u/m)^{-2m} (1 - u^2/m^2)^{-2 \binom{m}{2}} = e^{2u + u^2}.
\end{align*}
Again, from the equality of the two generating functions, the sum of the degrees of the irreducible characters of $W(B_n)$ is equal to the number of elements in the group which square to the identity, which is equivalent to the statement that $\varepsilon(\chi) = 1$ for every complex irreducible character $\chi$ of the group.

\subsection{Weyl groups of type $D$}  As a final example, we consider the Weyl group of type $D_n$, denoted by $W(D_n)$, which is the index 2 subgroup of $W(B_n) \cong (\mathbb{Z}/2\mathbb{Z}) \wr S_n$, consisting of elements with an even number of negative cycles.

It follows from \cite[Theorem 5]{Ch} that the number of elements in $W(D_n)$ whose square is the identity is $n!/2$ times the coefficient of $u^n$ in $e^{u^2}(e^{2u} + 1)$.

The complex irreducible characters of $W(D_n)$ may be described as follows \cite[Proposition 11.4.4]{Ca}.  For any partition $\lambda$ of $n/2$, when $n$ is even, the irreducible representation of $W(B_n)$ labeled by $(\lambda, \lambda)$ restricts to a direct sum of two non-isomorphic irreducible representations of the same dimensions of $W(D_n)$.  For any pair of partitions $(\mu, \tau)$ such that $|\mu| + |\tau| = n$ and $\mu \neq \tau$, the irreducible representations of $W(B_n)$ labeled by $(\mu, \tau)$ and $(\tau, \mu)$ both restrict to isomorphic irreducible representations of $W(D_n)$.  These account for all distinct irreducible representations of the Weyl group of type $D_n$.

Using this parametrization of the irreducible characters of $W(D_n)$, and the formula for the character degrees for the irreducible characters of $W(B_n)$, the sum of the degrees of the irreducible characters of $W(D_n)$ is $n!/2$ times the coefficient of $u^n$ in
\begin{align*}
\lim_{m \rightarrow \infty} &\sum_{\lambda} s_{\lambda}(u/m, \ldots, u/m) s_{\lambda}(u/m, \ldots, u/m) \\
&+ \lim_{m \rightarrow \infty} \sum_{\mu} s_{\mu} (u/m, \ldots, u/m) \sum_{\tau} s_{\tau}(u/m, \ldots, u/m).
\end{align*}

We apply the identity (\ref{Schur1}), along with the Schur function identity \cite[I.4, Equation (4.3)]{Mac}
\begin{equation} \label{Schur2}
\sum_{\lambda} s_{\lambda}(x) s_{\lambda}(y) = \prod_{i,j} (1 - x_i y_j)^{-1},
\end{equation}
to rewrite the previous expression as
$$\lim_{m \rightarrow \infty} (1 - u^2/m^2)^{-m^2} + \lim_{m \rightarrow \infty} (1-u/m)^{-2m} (1 - u^2/m^2)^{-2 \binom{m}{2}} = e^{u^2} (e^{2u} + 1).$$
By matching coefficients of generating functions, we again have that the sum of the irreducible character degrees is equal to the number of elements which square to the identity, in the Weyl group of type $D_n$, so $\varepsilon(\chi) = 1$ for every irreducible character $\chi$ of this group.

\section{Real character degree sums for $GL(n,q)$} \label{GLnSum}

  We require some background on polynomials. For a monic polynomial $\phi(t) \in \mathbb{F}_q[t]$ of degree $n$ with non-zero constant term, we define the $*$-conjugate $\phi^*(t)$ by
\[ \phi^*(t) := \phi(0)^{-1} t^n \phi(t^{-1}).\] Thus if
\[ \phi(t) = t^n + a_{n-1}t^{n-1} + \cdots + a_1 t + a_0 \] then
\[ \phi^*(t) = t^n + a_1a_0^{-1} t^{n-1} + \cdots + a_{n-1}a_0^{-1} t + a_0^{-1}. \] We say that $\phi$ is self-conjugate (or $*$-self conjugate) if $\phi(0) \neq 0$ and $\phi^*=\phi$.

We let $N^*(d,q)$ denote the number of monic irreducible self-conjugate polynomials $\phi(t)$ of degree $d$ over $\mathbb{F}_{q}$, and let $M^*(d,q)$ denote the number of (unordered) conjugate pairs $\{ \phi,\phi^* \}$ of monic irreducible polynomials of degree $d$ over $\mathbb{F}_{q}$ that are not self conjugate. The following lemma \cite[Lemma 1.3.17 (a) and (d)]{FNP} will be helpful.

\begin{lemma} \label{prodlem} Let $e=1$ if the characteristic is even, and $e=2$ if the characteristic is odd.
\begin{enumerate}
\item $\prod_{d \geq 1} (1-w^d)^{-N^*(2d,q)} (1-w^d)^{-M^*(d,q)} = \frac{(1-w)^e}{1-qw}$
\item $\prod_{d \geq 1} (1+w^d)^{-N^*(2d,q)} (1-w^d)^{-M^*(d,q)} = 1-w$
\end{enumerate}
\end{lemma}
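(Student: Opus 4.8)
The plan is to derive both identities from the unique factorization of polynomials over $\F_q$, together with a single bijection that ``folds'' a self-conjugate polynomial in half.

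First I would record a factorization identity. Since every monic polynomial over $\F_q$ with nonzero constant term is uniquely a product of monic irreducibles $\phi\neq t$, counting by degree gives
\[
\prod_{\phi\neq t}\bigl(1-w^{\deg\phi}\bigr)^{-1}=\frac{1-w}{1-qw},
\]
the product running over all monic irreducibles of $\F_q[t]$ other than $t$. Now I would split these $\phi$ according to their behaviour under $*$-conjugation: the $e$ self-conjugate linear polynomials ($t-1$, and $t+1$ when $q$ is odd); for each $d\geq 1$ the $N^*(2d,q)$ self-conjugate irreducibles of degree $2d$ --- here one uses that a self-conjugate irreducible of degree $>1$ has even degree, since its roots are permuted in pairs $\alpha\leftrightarrow\alpha^{-1}$ with no fixed point (no root being $\pm 1$); and for each $d\geq 1$ the $M^*(d,q)$ conjugate pairs $\{\psi,\psi^*\}$ with $\deg\psi=d$, each contributing two polynomials of degree $d$. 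Substituting into the displayed identity and moving the factor $(1-w)^{-e}$ to the right-hand side yields
\begin{equation}\label{eq:starprodlem}
\prod_{d\geq 1}\bigl(1-w^{2d}\bigr)^{-N^*(2d,q)}\bigl(1-w^{d}\bigr)^{-2M^*(d,q)}=\frac{(1-w)^{e+1}}{1-qw}.
\end{equation}
Because $1-w^{2d}=(1-w^{d})(1+w^{d})$, the product of the left-hand sides of parts (1) and (2) of the lemma is exactly the left-hand side of \eqref{eq:starprodlem}. Hence once part (1) is proved, part (2) follows by dividing \eqref{eq:starprodlem} by it, the quotient of the right-hand sides being $1-w$. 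So it suffices to prove part (1).

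Next I would reinterpret part (1). Replacing $w$ by $w^{2}$, its left-hand side becomes $\prod_{d\geq 1}(1-w^{2d})^{-N^*(2d,q)-M^*(d,q)}$, and decomposing a self-conjugate monic polynomial $f$ with $\gcd(f,t(t^{2}-1))=1$ into its self-conjugate irreducible factors (of even degree $2d$) and the products $\psi\psi^*$ arising from its conjugate-pair factors (of degree $2d$) identifies this with $\sum_{f}w^{\deg f}$, summed over all self-conjugate monic $f$ coprime to $t(t^{2}-1)$. Thus it is enough to show $\sum_{f}w^{\deg f}=(1-w^{2})^{e}/(1-qw^{2})$, and for this I would use a folding bijection. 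Any such $f$ has even degree, say $2n$, and constant term $1$ (its $2n$ roots pair as $\alpha\leftrightarrow\alpha^{-1}$, none being $0$ or $\pm 1$), so $f(t)\,t^{-n}$ is a Laurent polynomial invariant under $t\mapsto t^{-1}$ and therefore equals $g(t+t^{-1})$ for a unique polynomial $g$; since $t^{j}+t^{-j}$ is a monic polynomial of degree $j$ in $y=t+t^{-1}$ (via $t^{j+1}+t^{-(j+1)}=(t+t^{-1})(t^{j}+t^{-j})-(t^{j-1}+t^{-(j-1)})$), one checks that $g$ is monic of degree $n$. The map $f\mapsto g$, with inverse $g\mapsto t^{\deg g}g(t+t^{-1})$, should be a bijection from the self-conjugate monic $f$ of degree $2n$ with $\gcd(f,t(t^{2}-1))=1$ onto the monic $g$ of degree $n$ with $g(2)\neq 0$ and $g(-2)\neq 0$ when $q$ is odd (using $f(1)=g(2)$, $f(-1)=(-1)^{n}g(-2)$, and $2\neq -2$), and onto the monic $g$ of degree $n$ with $g(0)\neq 0$ when $q$ is even (where $t^{2}-1=(t-1)^{2}$ and $f(1)=g(0)$). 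In either parity the target is the set of monic polynomials of degree $n$ coprime to $e$ fixed distinct linear polynomials, which is enumerated by $(1-w)^{e}/(1-qw)$; hence $\sum_{f}w^{\deg f}=(1-w^{2})^{e}/(1-qw^{2})$, and replacing $w^{2}$ by $w$ gives part (1). Part (2) then follows as above.

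I expect the main obstacle to be verifying this folding bijection: that $f(t)t^{-n}$ is genuinely a monic polynomial of degree $n$ in $t+t^{-1}$ (which uses $f(0)=1$ and self-conjugacy), that the two maps are mutually inverse, and --- most delicately --- that the coprimality conditions transfer correctly between $f$ and $g$, with the characteristic-two case ($2=0$ and $t^{2}-1=(t-1)^{2}$) treated separately so that in each parity exactly $e$ linear conditions on $g$ occur. The remaining ingredients (unique factorization, the even-degree property of self-conjugate irreducibles, and the count of monic polynomials coprime to $e$ fixed distinct linear factors) are routine.
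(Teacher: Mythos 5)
Your proof is correct. For comparison: the paper offers no argument of its own here — Lemma \ref{prodlem} is simply quoted from \cite[Lemma 1.3.17 (a) and (d)]{FNP} — so what you have produced is a self-contained derivation where the paper relies on a citation. Your route is: (i) split the Euler product $\prod_{\phi \neq t}(1-w^{\deg \phi})^{-1} = (1-w)/(1-qw)$ according to the action of $*$ on irreducibles (the $e$ self-conjugate linear polynomials, the even-degree self-conjugate ones — even because inversion acts fixed-point-freely on the roots — and the conjugate pairs), which gives $\prod_{d\geq 1}(1-w^{2d})^{-N^*(2d,q)}(1-w^d)^{-2M^*(d,q)} = (1-w)^{e+1}/(1-qw)$ and hence reduces part (2) to part (1) by dividing formal power series with constant term $1$; (ii) recognize the left side of (1) at $w^2$ as the degree generating function for self-conjugate monic polynomials prime to $t(t^2-1)$; (iii) enumerate these by the folding bijection $f(t) = t^n g(t+t^{-1})$ onto monic $g$ of degree $n$ avoiding $e$ fixed distinct linear factors ($y-2$ and $y+2$ for $q$ odd, $y$ for $q$ even), counted by $(1-w)^e/(1-qw)$. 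The delicate points you flag all check out: such $f$ has constant term $1$ since its roots pair as $\alpha \leftrightarrow \alpha^{-1}$ without fixed points; symmetric Laurent polynomials form a polynomial ring in $y = t+t^{-1}$ in every characteristic, because the recursion expressing $t^j + t^{-j}$ as a monic integral polynomial in $y$ involves no division; and the evaluations $f(0)=1$, $f(1) = g(2)$, $f(-1) = (-1)^n g(-2)$ transfer the coprimality conditions correctly, with exactly $e$ linear conditions on $g$ in each characteristic. The net effect is a bijective, elementary explanation of the factor $(1-w)^e/(1-qw)$ that the paper imports as a black box, at the cost of a page of routine verification that the citation avoids.
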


We now obtain a generating function for the sum of the degrees of the real-valued irreducible complex characters of $GL(n,q)$.

\begin{theorem} \label{genfnGL} Let $e=1$ if $q$ is even, and $e=2$ if $q$ is odd.  The sum of the degrees of the real-valued characters of $GL(n,q)$ is $(q^n - 1) \cdots (q-1)$ times the coefficient of $u^n$ in
\[ \prod_{i \geq 1} \frac{(1 + u/q^i)^e}{1 - u^2/q^i}. \]
\end{theorem}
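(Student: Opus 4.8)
The plan is to mimic the symmetric-function argument used in Section~\ref{Weyl} for the Weyl groups, but now with the parametrization of $\Irr(GL(n,q))$ by Green's theory: an irreducible character corresponds to a function assigning a partition to each monic irreducible polynomial $\phi \neq t$ over $\F_q$, subject to a size-$n$ constraint. First I would recall the classical formula for the degree of the character indexed by such data: up to the factor $\psi_n(q) := (q^n-1)(q^{n-1}-1)\cdots(q-1)$, the degree is a product over the relevant polynomials $\phi$ of principal-specialized Schur functions, namely a factor behaving like $q^{d(\phi)\,n(\lambda^{(\phi)})}/\text{(hook product in }q^{d(\phi)})$, which by the principal specialization identity $s_\lambda(1,t,t^2,\dots) = t^{n(\lambda)}/\prod_{b\in\lambda}(1-t^{h(b)})$ can be written as a limit of $s_{\lambda^{(\phi)}}$ evaluated at a long string of equal variables scaled appropriately. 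This is the same trick that turned $d_\lambda$ into $n!\lim_m s_\lambda(u/m,\dots,u/m)$ above.

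Next I would impose reality. A real-valued irreducible character of $GL(n,q)$ corresponds, under complex conjugation of characters, to the involution $\phi \mapsto \phi^*$ on the indexing polynomials; the character is real exactly when the partition data is invariant under this involution, i.e.\ $\lambda^{(\phi^*)} = \lambda^{(\phi)}$ for all $\phi$. So the sum over real characters factors, over the $*$-orbits of monic irreducible polynomials, into three types of local contributions: (i) the fixed polynomial $\phi = t+a$ cases that are self-conjugate of degree~$1$ (only $t-1$, and $t+1$ when $q$ is odd), (ii) self-conjugate irreducibles $\phi$ of degree $2d$, each contributing a single free partition $\lambda^{(\phi)}$, and (iii) non-self-conjugate pairs $\{\phi,\phi^*\}$ of degree $d$, each contributing a single free partition. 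Summing the local Schur-function generating function over all partitions via identity~(\ref{Schur1}), $\sum_\lambda s_\lambda(x) = \prod_i(1-x_i)^{-1}\prod_{i<j}(1-x_ix_j)^{-1}$, and taking the $m\to\infty$ limit as in Section~\ref{Weyl}, each local factor collapses: a self-conjugate irreducible of degree $2d$ contributes (in the variable $w = u/q^{\,d}$-type bookkeeping, keeping track that its natural variable is $q^{2d}$) a factor of the shape $(1-w^{d})^{-1}$ with $w$ a power of $u/q^{\,\text{something}}$, and similarly for conjugate pairs, while the degree-$1$ self-conjugate polynomials each contribute $\prod_{i\ge1}(1-u^2/q^i)^{-1}\prod_{i\ge1}(1+u/q^i)$ — wait, more precisely, I expect the two relevant evaluations to split into an ``$x_i$'' part giving $\prod_i(1+u/q^i)$ (from the self-conjugate orbits) and an ``$x_ix_j$'' part giving $\prod_i(1-u^2/q^i)^{-1}$. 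The bookkeeping of which power of $q$ attaches to which variable is where care is needed.

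After assembling the product over all $*$-orbits, the generating function for the real character degree sum becomes $\prod_{i\ge1}(1-u^2/q^i)^{-1}$ times $\prod_{i\ge1}(1+u/q^i)^{\,c_1}$ (where $c_1 = 1$ for $q$ even, $2$ for $q$ odd, counting the degree-$1$ self-conjugate polynomials) times an infinite product over $d\ge1$ of factors built from $N^*(2d,q)$ and $M^*(d,q)$. This last product is exactly what Lemma~\ref{prodlem} is designed to evaluate: substituting $w = u^2/q^{?}$ and telescoping using parts (1) and (2) of the lemma (part (1) with the $e$ exponent handling self-conjugate irreducibles through even-degree variables, part (2) handling the cross terms), the $N^*$ and $M^*$ exponents disappear and leave a clean product $\prod_{i\ge1}\frac{(1+u/q^i)^e}{1-u^2/q^i}$, matching the claim. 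The main obstacle I anticipate is \emph{not} any single deep step but rather the careful tracking of specialization variables: each irreducible polynomial of degree $d$ contributes its partition's Schur function specialized at variables that are powers of $q^{-d}$ starting from an appropriate offset, and one must verify that the product over all such polynomials — organized by Lemma~\ref{prodlem}'s exponents — reindexes correctly into $\prod_{i\ge1}(\cdots)$ over a single index $i$. Getting the exponent $e$ to land on $(1+u/q^i)$ rather than elsewhere, and confirming that the self-conjugate-irreducible contributions feed into part (1) versus part (2) of the lemma correctly (depending on whether one uses the ``$-w^d$'' or ``$+w^d$'' version), is the delicate combinatorial accounting that makes the proof work.
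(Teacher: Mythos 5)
Your route is the paper's route: parametrize $\Irr(GL(n,q))$ by partition-valued functions on monic irreducibles, write the degree as a product of principally specialized Schur functions, impose reality via $\lambda(\phi)=\lambda(\phi^*)$, organize the sum over $*$-orbits with exponents $N^*(d,q)$ and $M^*(d,q)$, apply the identities \eqref{Schur1} and \eqref{Schur2}, and collapse everything with Lemma \ref{prodlem} together with the facts $N^*(1,q)=e$ and $N^*(d,q)=0$ for odd $d>1$. So there is no divergence of method. (A small point: no $m\to\infty$ limit of equal variables is needed here, unlike Section \ref{Weyl}; one uses the specialization $s_\lambda(1,t,t^2,\ldots)=t^{n(\lambda)}/\prod_{b\in\lambda}(1-t^{h(b)})$ directly with $t=q^{-d(\phi)}$.)

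However, the step you defer as ``delicate bookkeeping'' is precisely the content you have not supplied, and your provisional accounting of it is wrong in detail. The degree-one self-conjugate polynomials do not by themselves produce $\prod_i(1+u/q^i)$: by \eqref{Schur1} with $x_i=u/q^i$, each such polynomial contributes $\prod_i(1-u/q^i)^{-1}\prod_{i<j}(1-u^2/q^{i+j})^{-1}$, and factors of the form $1+u/q^i$ only appear after cancellation against the even-degree self-conjugate and conjugate-pair contributions. Concretely, the paper splits the full product as $A(u)=B(u)C(u)$ with a diagonal part $B(u)=\prod_i\prod_d(1-u^d/q^{id})^{-N^*(d,q)}(1-u^{2d}/q^{2id})^{-M^*(d,q)}$ and an off-diagonal part $C(u)$ over pairs $i<j$; part (1) of Lemma \ref{prodlem} with $w=u^2/q^{2i}$ gives $B(u)=\prod_i(1+u/q^i)^e(1-u^2/q^{2i-1})^{-1}$, while both parts of the lemma with $w=u^2/q^{i+j}$ reduce $C(u)$ to $\prod_{i<j}(1-u^2/q^{i+j})(1-u^2/q^{i+j-1})^{-1}$, which telescopes to $\prod_i(1-u^2/q^{2i})^{-1}$; only the product of these two yields $\prod_i(1+u/q^i)^e(1-u^2/q^i)^{-1}$, the odd powers of $q$ in the denominator coming from $B$ and the even ones from $C$. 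Since this splitting, the two distinct substitutions into Lemma \ref{prodlem}, and the final telescoping over $i<j$ constitute the actual proof beyond the standard setup, your write-up has a genuine gap at exactly that point, even though every tool you name is the right one.
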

\begin{proof} From \cite[Chapter IV]{Mac}, the irreducible characters of $GL(n,q)$ correspond to choosing a partition $\lambda(\phi)$ for each monic, irreducible polynomial $\phi(t) \neq t$, in such a way that $\sum_{\phi} d(\phi)|\lambda(\phi)| = n$, where $d(\phi)$ is the degree of $\phi$. From \cite[pg. 286]{Mac}, the degree of such a character is:

  \[ (q^n-1) \cdots (q^2-1)(q-1) \prod_{\phi} \frac{q^{d(\phi) n(\lambda(\phi)')}}{\prod_{b \in \lambda(\phi)} (q^{d(\phi) h(b)}-1)} .\]
Here, $h(b)$ is a hook-length as in Section \ref{Weyl}, $\lambda'$ is the conjugate partition of $\lambda$, and $n(\lambda)$ is the statistic defined as $n(\lambda) = \sum_i (i-1) \lambda_i$, where $\lambda_i$ is the $i$th part of $\lambda$. By \cite[pg. 11]{Mac}, one has that
$\sum_{b \in \lambda} h(b) = n(\lambda) + n(\lambda') + |\lambda|$, so it follows from \cite[pg. 45]{Mac} that the character
  degree can be rewritten as:
  \[ \frac{(q^n-1) \cdots (q^2-1)(q-1)}{q^n} \prod_{\phi} s_{\lambda(\phi)}(1,1/q^{d(\phi)},1/q^{2d(\phi)},\cdots) ,\] where $s_{\lambda}$ is a Schur function, as in Section \ref{Weyl}.

It is known, by \cite[1.1.1]{BT} and \cite[Lemma 3.1]{GV}, that a character of $GL(n,q)$ is real precisely when $\lambda(\phi)=\lambda(\phi^*)$ for all monic irreducible polynomials $\phi$. Thus the sum of the degrees of the real characters of $GL(n,q)$ is equal to $(q^n-1) \cdots (q-1) / q^n$ multiplied by the coefficient of $u^n$
  in

 \begin{eqnarray*}
 & & \prod_{d \geq 1} \left[ \sum_{\lambda} u^{d |\lambda|} s_{\lambda}(1,1/q^d,1/q^{2d},\cdots) \right] ^{N^*(d,q)} \\
 & & \cdot \prod_{d \geq 1} \left[ \sum_{\lambda} u^{2 d |\lambda|} s_{\lambda}(1,1/q^d,1/q^{2d},\cdots)^2 \right] ^{M^*(d,q)}.
 \end{eqnarray*}

This is $(q^n-1) \cdots (q-1)$ multiplied by the coefficient of $u^n$ in

   \[ \prod_{d \geq 1} \left[ \sum_{\lambda} s_{\lambda}(u^d/q^d,u^d/q^{2d},\cdots) \right] ^{N^*(d,q)}
  \left[ \sum_{\lambda} s_{\lambda}(u^d/q^d,u^d/q^{2d},\cdots)^2 \right] ^{M^*(d,q)} \]

By the Schur function identities \eqref{Schur1} and \eqref{Schur2}, it follows that the sum
of the degrees of the real characters of $GL(n,q)$ is $(q^n-1) \cdots (q-1)$ multiplied by the coefficient
of $u^n$ in

\begin{eqnarray*}
A(u) & := & \prod_d \left[ \prod_i (1-u^d/q^{id})^{-1} \prod_{i<j} (1-u^{2d}/q^{(i+j)d})^{-1} \right]^{N^*(d,q)} \\
& & \cdot \prod_d \left[ \prod_{i,j} (1-u^{2d}/q^{(i+j)d})^{-1} \right] ^{M^*(d,q)}.
\end{eqnarray*}

Then $A(u) = B(u) C(u)$, where

\[ B(u) := \prod_i \prod_d (1-u^d/q^{id})^{-N^*(d,q)} (1-u^{2d}/q^{2id})^{-M^*(d,q)}  \quad \text{ and } \]
\[ C(u) := \prod_{i<j} \prod_d (1-u^{2d}/q^{(i+j)d})^{-N^*(d,q)} (1-u^{2d}/q^{(i+j)d})^{-2 M^*(d,q)}. \]

From \cite[Lemma 1.3.16]{FNP}, $N^*(1,q)=e$ and $N^*(d,q)=0$ for $d>1$ odd. Using this
together with part (1) of Lemma \ref{prodlem} (with $w=u^2/q^{2i}$) gives that
\begin{eqnarray*}
B(u) & = & \prod_i (1-u/q^i)^{-e} \prod_{d} (1-u^{2d}/q^{2id})^{-N^*(2d,q)} (1-u^{2d}/q^{2id})^{-M^*(d,q)} \\
& = & \prod_i (1-u/q^i)^{-e} \frac{(1-u^2/q^{2i})^e}{(1-u^2/q^{2i-1})} \\
& = & \prod_i \frac{(1+u/q^i)^e}{(1-u^2/q^{2i-1})}.
\end{eqnarray*}

Similarly, but using both parts of Lemma \ref{prodlem} (with $w=u^2/q^{i+j}$), one has that $C(u)$ is equal to
\begin{eqnarray*}
&  & \prod_{i<j} \left( 1-\frac{u^2}{q^{i+j}} \right)^{-e} \prod_d \left( 1- \frac{u^{4d}}{q^{2d(i+j)}} \right)^{-N^*(2d,q)}
\left( 1- \frac{u^{2d}}{q^{d(i+j)}} \right)^{-2M^*(d,q)} \\
& = & \prod_{i<j} \left( 1-\frac{u^2}{q^{i+j}} \right)^{-e}   \prod_d \left( 1- \frac{u^{2d}}{q^{d(i+j)}} \right)^{-N^*(2d,q)}
\left( 1- \frac{u^{2d}}{q^{d(i+j)}} \right)^{- M^*(d,q)} \\
& & \cdot \prod_d (1+u^{2d}/q^{d(i+j)})^{-N^*(2d,q)}
(1-u^{2d}/q^{d(i+j)})^{- M^*(d,q)} \\
& = & \prod_{i<j} (1-u^2/q^{i+j})^{-e} \frac{(1-u^2/q^{i+j})^e}{(1-u^2/q^{i+j-1})} (1-u^2/q^{i+j}) \\
& = & \prod_{i<j} \frac{(1-u^2/q^{i+j})}{(1-u^2/q^{i+j-1})} \\
& = & \prod_{i} (1-u^2/q^{2i})^{-1}.
\end{eqnarray*}

Combining these expressions for $B(u)$ and $C(u)$ gives that
\[ A(u) = \prod_i (1+u/q^i)^e \prod_i (1-u^2/q^i)^{-1} ,\]
giving the claim.  \end{proof}

We now consider the implications of Theorem \ref{genfnGL} in the cases that $q$ is even or odd separately.  In both cases, we can recover the result that $\varepsilon(\chi) = 1$ for every real-valued irreducible character $\chi$ of $GL(n,q)$, through a combinatorial proof.

Throughout the paper from here, we denote
$$\gamma_j = |GL(j,q)| = q^{{j \choose 2}} (q^j-1) \cdots (q-1) = q^{j^2}(1-1/q) \cdots (1 - 1/q^j),$$ and we set $\gamma_0 =1$.
The following lemma, (Corollary 2.2 of \cite{A}) will be helpful.

\begin{lemma} \label{eul} For $|t|<1,|q|<1$,
\begin{enumerate}
\item \[ 1 + \sum_{n=1}^{\infty} \frac{t^n}{(1-q)(1-q^2) \cdots (1-q^n)} = \prod_{n=0}^{\infty} (1-tq^n)^{-1} \]
\item \[ 1 + \sum_{n=1}^{\infty} \frac{t^n q^{n(n-1)/2}}{(1-q)(1-q^2) \cdots (1-q^n)} = \prod_{n=0}^{\infty} (1+tq^n) \]
\end{enumerate}
\end{lemma}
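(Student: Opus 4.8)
The plan is to prove both identities by the classical functional-equation argument, keeping everything self-contained rather than invoking the $q$-binomial theorem. For part (1), I would set $F(t) := 1 + \sum_{n \geq 1} t^n/(q;q)_n$, where $(q;q)_n = (1-q)(1-q^2)\cdots(1-q^n)$. Since the ratio of consecutive coefficients is $t/(1-q^{n+1}) \to t$, for $|q|<1$ this series converges locally uniformly on $|t|<1$, so $F$ is holomorphic there with $F(0)=1$. The key step is the functional equation $(1-t)F(t) = F(tq)$, which I would get by computing
\[ F(t) - F(tq) = \sum_{n\geq 1} \frac{(1-q^n)t^n}{(q;q)_n} = \sum_{n\geq 1}\frac{t^n}{(q;q)_{n-1}} = t\,F(t), \]
using $(1-q^n)/(q;q)_n = 1/(q;q)_{n-1}$ and reindexing $n \mapsto n-1$. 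Iterating the functional equation gives $F(t) = \big(\prod_{k=0}^{N-1}(1-tq^k)\big)^{-1} F(tq^N)$ for all $N$; letting $N \to \infty$, the argument $tq^N \to 0$ so $F(tq^N)\to F(0)=1$, while the finite product tends to $\prod_{k\geq 0}(1-tq^k)$ (which converges since $\sum|t||q|^k<\infty$, and has no zero factor because $|tq^k|<1$). This is exactly (1).

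For part (2), I would set $G(t) := 1 + \sum_{n\geq 1} t^n q^{n(n-1)/2}/(q;q)_n$; now the consecutive-coefficient ratio is $tq^n/(1-q^{n+1}) \to 0$, so $G$ is entire in $t$ with $G(0)=1$. The same computation yields $G(t) = (1+t)G(tq)$: one has $G(t)-G(tq) = \sum_{n\geq 1} t^n q^{n(n-1)/2}/(q;q)_{n-1}$, and reindexing $n = m+1$ replaces the exponent $\binom{n}{2}$ by $\binom{m}{2}+m$, so this equals $t\sum_{m\geq 0} t^m q^{m(m-1)/2}q^m/(q;q)_m = t\,G(tq)$. Iterating, $G(t) = \big(\prod_{k=0}^{N-1}(1+tq^k)\big) G(tq^N) \to \prod_{k\geq 0}(1+tq^k)$, which is (2).

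There is no real obstacle here — the only points needing care are the convergence/holomorphy claims that legitimize comparing power series and passing to the limit $N\to\infty$, and tracking the exponent $q^{n(n-1)/2}$ correctly under the index shift in part (2). (Alternatively, both identities drop out of the $q$-binomial theorem, part (1) by setting the numerator parameter to $0$ and part (2) by rescaling $t$ and sending that parameter to infinity, but the direct argument above is shorter and avoids quoting that theorem.)
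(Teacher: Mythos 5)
Your proof is correct. Note that the paper does not prove this lemma at all: it is quoted verbatim as Corollary 2.2 of Andrews' book on partitions, where both identities are obtained as specializations of the $q$-binomial theorem (the route you mention parenthetically at the end). Your functional-equation argument is the classical Euler-style alternative and it checks out: the telescoping computation $F(t)-F(tq)=\sum_{n\ge 1}t^n(1-q^n)/(q;q)_n = tF(t)$ gives $(1-t)F(t)=F(tq)$, and the index shift in part (2) correctly turns $q^{\binom{n}{2}}$ into $q^{\binom{m}{2}}q^m$, yielding $G(t)=(1+t)G(tq)$; the analytic bookkeeping (local uniform convergence on $|t|<1$ for $F$, entirety of $G$, convergence of the infinite products since $\sum_k |t|\,|q|^k<\infty$, non-vanishing of $1-tq^k$, and $F(tq^N)\to F(0)=1$ as $N\to\infty$) is exactly what is needed to justify the iteration. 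What your approach buys is self-containedness — no appeal to the $q$-binomial theorem — at the cost of a short analytic argument; what the paper's citation buys is brevity, and in Andrews' setting the two identities appear as two limits of a single master identity, which makes their kinship transparent. Either way the lemma stands as used later in Theorems \ref{even} and \ref{odd}.
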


Next we prove one of the main results of this section.

\begin{theorem} \label{even} Let $q$ be even.  Then sum of the degrees of the real-valued irreducible characters of $GL(n,q)$ is equal to
the number of elements of $GL(n,q)$ whose square is the identity.  So, for any real-valued irreducible character $\chi$ of $GL(n,q)$, $\varepsilon(\chi) = 1$.  In particular, we have
that the sum of the degrees of the real-valued irreducible characters of $GL(n,q)$ is equal to $(q^n-1) \cdots (q-1)$ times the coefficient of $u^n$ in
\[ \frac{ \prod_{i \geq 1} (1+u/q^i) }{ \prod_{i \geq 1} (1-u^2/q^i) },\] and also to
\[ \sum_{r =0}^{\lfloor n/2 \rfloor} \frac{\gamma_n}{q^{r(2n-3r)} \gamma_r \gamma_{n-2r}}. \]
\end{theorem}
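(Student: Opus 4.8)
The plan is to proceed in three stages. The first displayed formula in the statement — that the real character degree sum equals $(q^n-1)\cdots(q-1)$ times the coefficient of $u^n$ in $\prod_{i\geq 1}(1+u/q^i)/\prod_{i\geq 1}(1-u^2/q^i)$ — is just Theorem \ref{genfnGL} with $e=1$, so nothing is needed there. The content lies in (i) converting that generating function into the explicit sum $\sum_{r=0}^{\lfloor n/2\rfloor}\gamma_n/(q^{r(2n-3r)}\gamma_r\gamma_{n-2r})$, and (ii) showing that this same sum counts the elements of $GL(n,q)$ whose square is the identity. Once (i) and (ii) are established, equation $(\ref{invol})$ forces the $\varepsilon(\chi)=-1$ part of its left-hand side to vanish, so $\varepsilon(\chi)=1$ for every real-valued irreducible $\chi$.

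For (i), I would expand each factor of the generating function using Lemma \ref{eul}. Replacing $q$ by $1/q$ and $t$ by $u/q$ in part (2), and clearing denominators via $(1-1/q)\cdots(1-1/q^m)=\gamma_m/q^{m^2}$, gives $\prod_{i\geq 1}(1+u/q^i)=\sum_{m\geq 0}q^{\binom{m}{2}}u^m/\gamma_m$; likewise, replacing $q$ by $1/q$ and $t$ by $u^2/q$ in part (1) gives $\prod_{i\geq 1}(1-u^2/q^i)^{-1}=\sum_{r\geq 0}q^{r(r-1)}u^{2r}/\gamma_r$. Multiplying the two series, extracting the coefficient of $u^n$ (the terms with $m+2r=n$), and multiplying by $(q^n-1)\cdots(q-1)=\gamma_n/q^{\binom{n}{2}}$ leaves $\sum_{r=0}^{\lfloor n/2\rfloor}\gamma_n\,q^{\binom{n-2r}{2}+r(r-1)-\binom{n}{2}}/(\gamma_r\gamma_{n-2r})$; the elementary identity $\binom{n}{2}-\binom{n-2r}{2}-r(r-1)=r(2n-3r)$ then yields the second displayed formula.

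For (ii), since $q$ is even we have $g^2=1$ exactly when $(g-1)^2=0$, so these elements are precisely $g=1+N$ with $N^2=0$. If $N$ has rank $r$, then $N^2=0$ gives $\mathrm{im}(N)\subseteq\ker(N)$, forcing $r\leq n-r$; moreover $N$ kills $\mathrm{im}(N)$, so $N$ factors through $\F_q^n/\mathrm{im}(N)$ and is recovered uniquely from the pair consisting of the $r$-dimensional subspace $W=\mathrm{im}(N)$ and the induced surjection $\F_q^n/W\to W$. There are $\binom{n}{r}_q$ choices of $W$ and $(q^{n-r}-1)(q^{n-r}-q)\cdots(q^{n-r}-q^{r-1})$ surjective linear maps from an $(n-r)$-dimensional space onto an $r$-dimensional one, so the count is $\sum_{r=0}^{\lfloor n/2\rfloor}\binom{n}{r}_q\prod_{i=0}^{r-1}(q^{n-r}-q^i)$. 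Writing $\binom{n}{r}_q$ and the product in terms of the $\gamma$'s, again via $(q^k-1)\cdots(q-1)=\gamma_k/q^{\binom{k}{2}}$, and collecting powers of $q$, the $r$-th term equals $\gamma_n/(q^{r(2n-3r)}\gamma_r\gamma_{n-2r})$, matching (i). This establishes the equality of the two quantities, and with it the theorem.

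I expect the only real obstacle to be the bookkeeping of powers of $q$ in (i) and (ii) — keeping $\binom{n-2r}{2}$, $r(r-1)$, $\binom{r}{2}$, $\binom{n-r}{2}$ and $\binom{n}{2}$ straight so that both computations land on the exponent $r(2n-3r)$. A secondary point requiring care is verifying that $N\mapsto(\mathrm{im}(N),\,\F_q^n/\mathrm{im}(N)\to\mathrm{im}(N))$ is a genuine bijection onto (subspace, surjection) pairs, the crux being that $N^2=0$ is equivalent to $\mathrm{im}(N)\subseteq\ker(N)$.
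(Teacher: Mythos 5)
Your proposal is correct, and its generating-function half is the paper's argument: expand $\prod_i(1+u/q^i)$ and $\prod_i(1-u^2/q^i)^{-1}$ via Lemma \ref{eul}, extract the coefficient of $u^n$ from terms with $m+2r=n$, and check the exponent identity $\binom{n}{2}-\binom{n-2r}{2}-r(r-1)=r(2n-3r)$ (the paper compresses this into ``it is straightforward to see''; your rewritten series $\sum_m q^{\binom{m}{2}}u^m/\gamma_m$ and $\sum_r q^{r(r-1)}u^{2r}/\gamma_r$ are correct restatements of the paper's expansions). Where you genuinely diverge is the involution count: the paper simply quotes Morrison \cite[Section 1.11]{Mo} for the formula $\sum_{r=0}^{\lfloor n/2\rfloor}\gamma_n/(q^{r(2n-3r)}\gamma_r\gamma_{n-2r})$ in even characteristic (equivalently, it follows from centralizer orders of unipotent elements of type $(2^r1^{n-2r})$), whereas you derive it from scratch: in characteristic $2$, $g^2=1$ iff $g=1+N$ with $N^2=0$, and square-zero $N$ of rank $r$ correspond bijectively to pairs consisting of an $r$-dimensional subspace $W=\mathrm{im}(N)$ and a surjection $\F_q^n/W\to W$, giving $\sum_r \qb{n}{r}_q\prod_{i=0}^{r-1}(q^{n-r}-q^i)$; converting to $\gamma$'s, the $r$-th term is $\gamma_n q^{2\binom{r}{2}+\binom{n-2r}{2}-\binom{n}{2}}/(\gamma_r\gamma_{n-2r})$, and the same exponent identity gives $q^{-r(2n-3r)}$, as claimed. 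Your version buys a self-contained proof at the cost of a short extra linear-algebra count; the paper's citation is quicker but leaves the reader to consult \cite{Mo}. The final step — equality of the two sums kills $\sum_{\varepsilon(\chi)=-1}\chi(1)$ in (\ref{invol}), hence $\varepsilon(\chi)=1$ for all real-valued $\chi$ — is the same Frobenius--Schur argument as in the paper. No gaps.
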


\begin{proof} By Theorem \ref{genfnGL}, in the case that $q$ is even, we have that the sum of the degrees of the real-valued characters of $GL(n,q)$ is $(q^n - 1) \cdots (q-1)$ times the coefficient in $u^n$ in
\[ \frac{\prod_{i \geq 1} (1+u/q^i) }{ \prod_{i \geq 1} (1-u^2/q^i) }.\]
From Lemma \ref{eul},

\[ \prod_i (1+u/q^i) = \sum_{l \geq 0} \frac{u^l}{q^{{l+1 \choose 2}} (1-1/q)(1-1/q^2) \cdots (1-1/q^l)},  \text{ and }\]
\[ \prod_i (1-u^2/q^i)^{-1} = \sum_{r \geq 0} \frac{u^{2r}}{q^r (1-1/q) \cdots (1-1/q^r)}. \]

Thus the sum of the degrees of the real irreducible characters of $GL(n,q)$ is equal to
$(q^n-1) \cdots (q-1)$ multiplied by
\begin{equation} \label{one}  \sum_{r=0}^{\lfloor n/2 \rfloor} \frac{1}{q^r (1-1/q) \cdots (1-1/q^r)} \frac{1}{q^{{n-2r+1 \choose 2}}(1-1/q) \cdots (1-1/q^{n-2r})}. \end{equation}

From \cite[Section 1.11]{Mo}, in even characteristic the number of elements of $GL(n,q)$ whose square is the identity is
\begin{equation} \label{two} \sum_{r=0}^{\lfloor n/2 \rfloor} \frac{\gamma_n}{q^{r(2n-3r)} \gamma_r \gamma_{n-2r}}.\end{equation}

It is straightforward to see that $(q^n-1) \cdots (q-1)$ multiplied by the rth term in \eqref{one} is equal to
the rth term in \eqref{two}.  Thus, the sum of the degrees of the real-valued characters of $GL(n,q)$ is equal to the number of elements in $GL(n,q)$ whose square is the identity.  By the Frobenius-Schur theory, this is equivalent to the statement that $\varepsilon(\chi) = 1$ for any real-valued irreducible character $\chi$ of $GL(n,q)$.  This completes the proof.
\end{proof}

We record the following identity, which is directly implied by the proof of Theorem \ref{even} above.  We note that it will be used later in studying the real character degree sum of the finite unitary group in the next section.

\begin{cor} \label{iden} For any $q$, we have the formal identity
\[ \frac{ \prod_{i \geq 1}(1+u/q^i)}{\prod_{i \geq 1} (1-u^2/q^i)} = \sum_{n \geq 0} u^n q^{n \choose 2}
\sum_{r =0}^{\lfloor n/2 \rfloor} \frac{1}{q^{r(2n-3r)} \gamma_r \gamma_{n-2r}}. \]
\end{cor}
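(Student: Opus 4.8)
The plan is to observe that the chain of equalities already carried out in the proof of Theorem \ref{even} is, once stripped of its representation-theoretic interpretation, a formal power series identity that makes no use of the parity of $q$: the hypothesis ``$q$ even'' entered that proof only through the count of elements of $GL(n,q)$ squaring to the identity taken from \cite{Mo} (the quantity in \eqref{two}). So I would simply reassemble those steps as an identity in $\mathbb{Z}[[u,q^{-1}]]$, equivalently an equality of formal series in $u$ whose coefficients are rational functions of $q$.

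First I would expand the two factors on the left side using Lemma \ref{eul}. Applying part (2) with $q \mapsto 1/q$ and $t \mapsto u/q$ gives
\[ \prod_{i \geq 1}(1 + u/q^i) = \sum_{l \geq 0} \frac{u^l}{q^{\binom{l+1}{2}}\,(1-1/q)(1-1/q^2)\cdots(1-1/q^l)}, \]
and applying part (1) with $q \mapsto 1/q$ and $t \mapsto u^2/q$ gives
\[ \prod_{i \geq 1}(1 - u^2/q^i)^{-1} = \sum_{r \geq 0} \frac{u^{2r}}{q^{r}\,(1-1/q)(1-1/q^2)\cdots(1-1/q^r)}. \]
Multiplying these two series and collecting the coefficient of $u^n$ (so that $l = n-2r$ with $0 \le r \le \lfloor n/2 \rfloor$) reproduces exactly the expression \eqref{one} from the proof of Theorem \ref{even}; this is the coefficient of $u^n$ on the left side of the corollary, and it does not matter whether $q$ is even or odd, since Lemma \ref{eul} is purely formal.

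It then remains to identify \eqref{one} with the coefficient $q^{\binom n2}\sum_{r=0}^{\lfloor n/2\rfloor} q^{-r(2n-3r)}\gamma_r^{-1}\gamma_{n-2r}^{-1}$ of $u^n$ on the right side, and this I would check term by term in $r$. Using $\gamma_j = q^{j^2}(1-1/q)\cdots(1-1/q^j)$, the two $r$-th terms carry the same product of factors $(1-1/q^k)$, so the claim collapses to the elementary exponent identity
\[ r + \binom{n-2r+1}{2} \;=\; -\binom n2 + r(2n-3r) + r^2 + (n-2r)^2, \]
both sides of which equal $\binom{n+1}{2} - 2nr + 2r^2$. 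This is precisely the bookkeeping underlying the sentence ``it is straightforward to see that $(q^n-1)\cdots(q-1)$ multiplied by the $r$th term in \eqref{one} is equal to the $r$th term in \eqref{two}'' in the proof of Theorem \ref{even}, now read as an equality of rational functions of $q$ valid for every $q$. Summing over $r$ and then over $n \ge 0$ against $u^n$ yields the stated identity. The only ``obstacle'' is this quadratic arithmetic in the exponents, which is routine; the substantive point is the observation that the computation behind Theorem \ref{even} never needed $q$ to be even.
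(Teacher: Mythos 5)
Your proposal is correct and is essentially the paper's own argument: the paper derives Corollary \ref{iden} exactly by reading the formal steps in the proof of Theorem \ref{even} (the Lemma \ref{eul} expansions giving \eqref{one} and the term-by-term comparison with the $q^{\binom{n}{2}}\sum_r q^{-r(2n-3r)}\gamma_r^{-1}\gamma_{n-2r}^{-1}$ expression) as a parity-independent identity of formal series. Your explicit verification of the exponent identity $r+\binom{n-2r+1}{2}=\binom{n+1}{2}-2nr+2r^2=-\binom{n}{2}+r(2n-3r)+r^2+(n-2r)^2$ is the "straightforward" bookkeeping the paper leaves to the reader, and it checks out.
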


\noindent {\it Remark:} For $q$ even, let $i_{GL}(n,q)$ denote the number of involutions in $GL(n,q)$. The following are the values of $i_{GL}(n,q)$, for $1 \leq n \leq 7$.  The data suggests that it might be the case that when written as a polynomial in $q$, all coefficients of powers of $q$ are $0,-1$ or $1$:

\[ i_{GL}(1,q) = 1, \quad i_{GL}(2,q) = q^2, \quad i_{GL}(3,q) = q (-1 + q^2 + q^3), \]

\[ i_{GL}(4,q) = q^2 (-1 + q^4 + q^6), \quad i_{GL}(5,q) = q^6 (-1 - q + q^4 + q^5 + q^6), \]

\[ i_{GL}(6,q) = q^5 (1 - q^3 - q^4 - q^5 - q^6 + q^9 + q^{10} + q^{11} + q^{13}), \]

\[ i_{GL}(7,q) = q^7 (1 - q^6 - q^7 - q^8 - q^9 - q^{10} + q^{13} + q^{14} + q^{15} + q^{16} +
   q^{17} ). \]

Next we consider the case of odd characteristic.  In this case, we must work a bit harder than in the $q$ even case, but we still recover the result that $\varepsilon(\chi) = 1$ for every real-valued irreducible character $\chi$ of $GL(n,q)$.  We need the following notation.  For any $q$, we let $\qb{n}{i}_q$ denote the $q$-binomial coefficient, so for any integers $n \geq i \geq 0$,
$$\qb{n}{i}_q = \frac{(q^n - 1) \cdots (q-1)}{(q^i - 1) \cdots (q-1)(q^{n-i} - 1) \cdots (q-1)}.$$
Recall the $q$-binomial theorem \cite[Equation (3.3.6)]{A}, which we apply in the proof of the next result:
\begin{equation} \label{qbin}
(1+xq)(1+xq^2) \cdots (1+xq^n) = \sum_{i=0}^n \qb{n}{i}_q q^{i(i+1)/2} x^i.
\end{equation}
We now recover Zelevinsky's result for $GL(n,q)$ for the case that $q$ is odd.

\begin{theorem} \label{odd} Let $q$ be odd.  Then sum of the degrees of the real-valued irreducible characters of $GL(n,q)$ is equal to
the number of elements of $GL(n,q)$ whose square is the identity.  So, for any real-valued irreducible character $\chi$ of $GL(n,q)$, $\varepsilon(\chi) = 1$.  In particular, we have
that the sum of the degrees of the real-valued irreducible characters of $GL(n,q)$ is equal to $(q^n-1) \cdots (q-1)$ times the coefficient of $u^n$ in
\[ \frac{ \prod_{i \geq 1} (1+u/q^i)^2 }{ \prod_{i \geq 1} (1-u^2/q^i) },\] and also to
\[ \sum_{r =0}^{n} \frac{\gamma_n}{\gamma_r \gamma_{n-r}}. \]
\end{theorem}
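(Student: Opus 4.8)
The plan is to mirror the structure of the proof of Theorem~\ref{even}, but to handle the extra square in the numerator using the $q$-binomial theorem. By Theorem~\ref{genfnGL}, for $q$ odd the sum of the real character degrees of $GL(n,q)$ is $(q^n-1)\cdots(q-1)$ times the coefficient of $u^n$ in
\[
\frac{\prod_{i\geq 1}(1+u/q^i)^2}{\prod_{i\geq 1}(1-u^2/q^i)},
\]
so the first half of the claim is immediate. The work is to identify the coefficient of $u^n$ with $\sum_{r=0}^n \gamma_n/(q^{\binom n2}\gamma_r\gamma_{n-r})$ (after dividing out the $q^{\binom n2}$ that converts $\gamma_n$ to $(q^n-1)\cdots(q-1)$), and to match this against the known count of square roots of the identity in $GL(n,q)$ in odd characteristic.

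First I would expand the three factors as $q$-series using Lemma~\ref{eul}: $\prod_i(1-u^2/q^i)^{-1}=\sum_{r\geq 0} u^{2r}/(q^r(1-1/q)\cdots(1-1/q^r))$ as before, and $\prod_i(1+u/q^i)$ via part (2) of Lemma~\ref{eul}. The new ingredient is the square $\prod_i(1+u/q^i)^2$; I would compute its coefficient of $u^m$ either by squaring the single-product series and using the Vandermonde-type convolution $\sum_{a+b=m}\qb{a+b}{a}_q q^{(\text{stuff})}=\cdots$, or — more cleanly — by noting that $\prod_i(1+u/q^i)^2$ can be regrouped and evaluated with the $q$-binomial theorem~\eqref{qbin}. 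Concretely, I expect the coefficient of $u^m$ in $\prod_{i\geq 1}(1+u/q^i)^2$ to come out to $\frac{1}{q^{\binom{m+1}{2}}}\sum_{a+b=m}\frac{1}{(1-1/q)\cdots(1-1/q^a)\,(1-1/q)\cdots(1-1/q^b)}\cdot(\text{a power of }q)$, i.e. essentially a Gaussian binomial coefficient $\qb{m}{a}_{1/q}$-weighted sum; this is exactly the kind of identity \eqref{qbin} delivers. Then I would convolve this with the $\sum_r u^{2r}(\cdots)$ series to extract the coefficient of $u^n$, collect terms, and simplify — the bookkeeping of the powers of $q$ is the part that requires care but is routine.

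The other half is to invoke the formula for $\#\{h\in GL(n,q): h^2=1\}$ in odd characteristic. In odd characteristic an involution is conjugate to $\mathrm{diag}(I_r,-I_{n-r})$, so the number of square roots of the identity is $\sum_{r=0}^n |GL(n,q)|/(|GL(r,q)|\,|GL(n-r,q)|)=\sum_{r=0}^n \gamma_n/(\gamma_r\gamma_{n-r})$, which is the stated closed form; I would cite the analogue of \cite[Section~1.11]{Mo} used in Theorem~\ref{even} for this. It then remains to check that the $r$-th term produced by the generating-function expansion above equals $\gamma_n/(q^{\binom n2}\gamma_r\gamma_{n-r})$ term by term (after multiplying by $(q^n-1)\cdots(q-1)=\gamma_n/q^{\binom n2}$), exactly as in the $q$ even case. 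I expect the main obstacle to be organizing the double sum cleanly enough that the $q$-binomial theorem collapses the convolution into the single sum $\sum_{r=0}^n$: naively one gets a sum over pairs $(a,b)$ with $a+b=m$ and an outer sum over $r$ with $m+2r=n$, and the nontrivial step is recognizing that the inner $(a,b)$-sum telescopes/simplifies via \eqref{qbin} so that only the combined index survives. Once that simplification is in hand the comparison with \eqref{two}'s analogue is a direct check, and the Frobenius--Schur conclusion $\varepsilon(\chi)=1$ follows from \eqref{invol} exactly as before.
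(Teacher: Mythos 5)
Your framing of the easy parts is fine and matches the paper: quoting Theorem \ref{genfnGL} for the generating function, the count $\sum_{r=0}^n \gamma_n/(\gamma_r\gamma_{n-r})$ of square roots of $1$ in odd characteristic (conjugates of $\mathrm{diag}(I_r,-I_{n-r})$, as in \cite{Mo}), and the Frobenius--Schur conclusion via (\ref{invol}). The gap is in the one step that is actually the content of the theorem, namely the identity (the paper's Corollary \ref{cort})
\[ \frac{\prod_{i \geq 1}(1+u/q^i)^2}{\prod_{i \geq 1}(1-u^2/q^i)} \;=\; \sum_{n \geq 0} u^n q^{\binom{n}{2}} \sum_{r=0}^n \frac{1}{\gamma_r\gamma_{n-r}}. \]
You propose to get this by expanding $\prod_i(1+u/q^i)^2$ as a convolution and then letting the $q$-binomial theorem (\ref{qbin}) ``collapse'' the double sum, but that mechanism does not apply here. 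Indeed, by Lemma \ref{eul}(2) the coefficient of $u^m$ in $\prod_i(1+u/q^i)^2$ is
\[ \frac{1}{q^{\binom{m+1}{2}}} \sum_{a+b=m} \frac{q^{ab}}{(1-1/q)\cdots(1-1/q^a)\,(1-1/q)\cdots(1-1/q^b)} \;=\; \frac{1}{q^{\binom{m+1}{2}}(1-1/q)\cdots(1-1/q^m)} \sum_{a=0}^m \qb{m}{a}_q, \]
i.e.\ up to normalization it is the Galois number $\sum_a \qb{m}{a}_q$, which has no product evaluation and no quadratic weight $q^{\binom{a}{2}}x^a$ in the summation index; (\ref{qbin}) says nothing about such sums, so neither the $(a,b)$-convolution nor the subsequent convolution with $\sum_r u^{2r}/(q^r(1-1/q)\cdots(1-1/q^r))$ ``telescopes'' by that theorem. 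The bookkeeping is not routine: some device is needed to introduce the alternating, $q^{\binom{t+1}{2}}$-weighted structure that (\ref{qbin}) can evaluate.

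The paper's way around this is exactly the idea missing from your sketch: rather than expanding the square directly, multiply the conjectured right-hand side by $\prod_i(1+u/q^i)^{-1}$, whose Euler expansion $\sum_t (-1)^t u^t/(q^t(1-1/q)\cdots(1-1/q^t))$ supplies the alternating signs; the resulting inner sum over $t$ is $\sum_t \qb{r}{t}_q(-1)^t q^{\binom{t+1}{2}}q^{-(n-r+1)t}$, which (\ref{qbin}) evaluates to a product that vanishes for $r>\lfloor n/2\rfloor$, and the surviving terms are precisely the coefficients in the already-established even-characteristic identity, Corollary \ref{iden}. This proves the intermediate identity (\ref{OddId}), and multiplying back by $\prod_i(1+u/q^i)$ yields Corollary \ref{cort}, after which the comparison with the involution count and the Frobenius--Schur conclusion go through as you describe. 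Without this reduction (or some substitute argument for the displayed identity), your proof is incomplete at its central step.
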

\begin{proof}  We first claim that for any $q$, we have
\begin{equation} \label{OddId}
\sum_{n \geq 0} u^n \sum_{t=0}^n \sum_{s=0}^{n-t} \frac{q^{{n-t \choose 2}}} {\gamma_s \gamma_{n-t-s}} \frac{(-1)^t}{q^t(1-1/q) \cdots (1-1/q^t)} = \prod_{i \geq 1} \frac{(1+u/q^i)}{(1-u^2/q^i)}.
\end{equation}
The coefficient of $u^n$ on the left hand side of \eqref{OddId} is equal to
\begin{eqnarray*}
& & \sum_{t=0}^n q^{{n-t \choose 2}} \sum_{s=0}^{n-t} \frac{1}{\gamma_s \gamma_{n-t-s}} \frac{(-1)^t}{q^t (1-1/q) \cdots (1-1/q^t)} \\
& = & \sum_{s=0}^n \frac{1}{\gamma_s} \sum_{t=0}^{n-s} \frac{q^{{n-t \choose 2}}}{\gamma_{n-t-s}} \frac{(-1)^t}{q^t(1-1/q) \cdots (1-1/q^t)} \\
& = & \sum_{s=0}^n \frac{1}{\gamma_s (q^{n-s}-1) \cdots (q-1)} \sum_{t=0}^{n-s} \qb{n-s}{t}_q \frac{(-1)^t q^{{n-t \choose 2}} q^{{t+1 \choose 2}} }{q^{{n-t-s \choose 2}} q^t} \\
& = & \sum_{s=0}^n \frac{q^{ns-(s^2+s)/2}}{\gamma_s (q^{n-s}-1) \cdots (q-1)} \sum_{t=0}^{n-s} \qb{n-s}{t}_q (-1)^t q^{{t+1 \choose 2}} q^{-(s+1)t}.
\end{eqnarray*}
Now let $r = n-s$, and apply the $q$-binomial theorem (\ref{qbin}) with $x = -1/q^{n-r+1}$ to obtain
\begin{eqnarray*}
& & \sum_{r=0}^n \frac{q^{n(n-r) - ((n-r)^2+(n-r))/2}} {\gamma_{n-r} (q^r-1) \cdots (q-1)}
\sum_{t=0}^r \qb{r}{t}_q (-1)^t q^{{t+1 \choose 2}} q^{-(n-r+1)t} \\
& = & \sum_{r=0}^{\lfloor n/2 \rfloor} \frac{q^{n(n-r) - ((n-r)^2+(n-r))/2} (1-1/q^{n-r}) \cdots (1-1/q^{n-2r+1})}
{q^{(n-r)^2} (1-1/q^{n-r}) \cdots (1-1/q) (q^r-1) \cdots (q-1)} \\
& = &  \sum_{r=0}^{\lfloor n/2 \rfloor} \frac{q^{n(n-r) - ((n-r)^2+(n-r))/2} q^{(n-2r)^2} q^{{r \choose 2}}}{q^{(n-r)^2} \gamma_{n-2r} \gamma_r} \\
& = & q^{{n \choose 2}} \sum_{r=0}^{\lfloor n/2 \rfloor} \frac{1}{q^{r(2n-3r)} \gamma_r \gamma_{n-2r}}.
\end{eqnarray*}
By Corollary \ref{iden}, this is exactly the coefficient of $u^n$ in
\[ \prod_{i \geq 1} \frac{(1+u/q^i)}{(1-u^2/q^i)},\]
giving \eqref{OddId}.

Now, it follows from Lemma \ref{eul} that
$$\prod_{i \geq 1} (1 + u/q^i)^{-1} = \sum_{t \geq 0} \frac{(-1)^t u^t}{q^t (1 - 1/q) \cdots (1-1/q^t)},$$
and so the left side of \eqref{OddId} is
\[ \left[ \sum_{m \geq 0} q^{{m \choose 2}} \sum_{s=0}^m \frac{u^m}{\gamma_s \gamma_{m-s}} \right] \left[ \prod_{i \geq 1} (1+u/q^i)^{-1} \right],\]
from which it follows that
\begin{equation} \label{OddId2}
\prod_{i \geq 1} \frac{(1+u/q^i)^2}{(1-u^2/q^i)} = \sum_{n \geq 0} u^n q^{\binom{n}{2}} \sum_{r=0}^n \frac{1}{\gamma_r \gamma_{n-r}}.
\end{equation}
It is known (see \cite[Section 1.11]{Mo}, for example) that in odd characteristic, the number of elements of $GL(n,q)$ whose square is the identity is
$$\sum_{r=0}^n \frac{\gamma_n}{\gamma_r \gamma_{n-r}} = (q^n - 1) \cdots (q-1) q^{{n \choose 2}} \sum_{r=0}^n \frac{1}{\gamma_r \gamma_{n-r}}.$$
By Theorem \ref{genfnGL}, the sum of the degrees of the real-valued characters of $GL(n,q)$, when $q$ is odd, is $(q^n - 1) \cdots (q-1)$ times the coefficient of $u^n$ in the left side of \eqref{OddId2}.  This, together with \eqref{OddId2}, gives that this sum of character degrees is the number of elements in $GL(n,q)$, $q$ odd, which square to the identity.  Thus $\varepsilon(\chi) = 1$ for every real-valued irreducible $\chi$ of $GL(n,q)$ by the Frobenius-Schur theory.
\end{proof}

We extract the following identity which was obtained in the proof above.

\begin{cor} \label{cort} For any $q$, we have the formal identity
\[\frac{\prod_{i \geq 1} (1+u/q^i)^2}{\prod_{i \geq 1} (1-u^2/q^i)} = \sum_{n \geq 0} u^n q^{{n \choose 2}} \sum_{r=0}^n \frac{1}{\gamma_r \gamma_{n-r}}.\]
\end{cor}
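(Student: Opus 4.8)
The identity in Corollary \ref{cort} is precisely equation \eqref{OddId2}, which was derived inside the proof of Theorem \ref{odd}. So the task is simply to isolate that derivation. The plan is to give a self-contained argument that does not route through the element-counting interpretation, since the identity is a purely formal statement about power series in $u$ (with coefficients rational functions in $q$).

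First I would recall, from Lemma \ref{eul}(2) applied with the substitution that turns the product into $\prod_{i\geq 1}(1+u/q^i)$, the expansion
\[
\prod_{i \geq 1} (1+u/q^i) = \sum_{l \geq 0} \frac{u^l}{q^{\binom{l+1}{2}}(1-1/q)\cdots(1-1/q^l)},
\]
and from Lemma \ref{eul}(1) the companion expansion of $\prod_{i\geq 1}(1-u^2/q^i)^{-1}$ as $\sum_{r\geq 0} u^{2r}/\bigl(q^r(1-1/q)\cdots(1-1/q^r)\bigr)$; these are exactly the two series written out in the proof of Theorem \ref{even}. Their product is, by Corollary \ref{iden}, equal to $\sum_{n\geq 0} u^n q^{\binom{n}{2}}\sum_{r=0}^{\lfloor n/2\rfloor} 1/\bigl(q^{r(2n-3r)}\gamma_r\gamma_{n-2r}\bigr)$, so I may take Corollary \ref{iden} as my starting point.

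Next I would multiply both sides of the Corollary \ref{iden} identity by $\prod_{i\geq 1}(1+u/q^i)$. On the left this produces $\prod_{i\geq 1}(1+u/q^i)^2/\prod_{i\geq 1}(1-u^2/q^i)$, the left side of Corollary \ref{cort}. On the right I substitute the Lemma \ref{eul}(2) expansion of $\prod_{i\geq 1}(1+u/q^i)$ displayed above and collect the coefficient of $u^n$; this is a double sum over the index $t$ coming from the extra factor and over the pair $(s,r)$ coming from Corollary \ref{iden}. The claim is that this triple sum collapses to $q^{\binom{n}{2}}\sum_{r=0}^n 1/(\gamma_r\gamma_{n-r})$. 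The cleanest route to this is exactly the chain of manipulations already carried out for equation \eqref{OddId}: rewrite the inner $t$-sum using the $q$-binomial coefficient, apply the $q$-binomial theorem \eqref{qbin} with the appropriate value of $x$ (a negative power of $q$) so that the alternating sum telescopes to a product of the shape $(1-1/q^{\,\cdot})\cdots(1-1/q^{\,\cdot})$, and simplify the resulting powers of $q$ using $\gamma_j = q^{j^2}(1-1/q)\cdots(1-1/q^j)$. The bookkeeping of exponents is the only delicate part; everything else is bilinear in the two Euler-type expansions.

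**Main obstacle.** The only real work is the exponent arithmetic in the $q$-binomial-theorem step: verifying that after the telescoping one indeed lands on $q^{\binom{n}{2}}\sum_{r=0}^n 1/(\gamma_r\gamma_{n-r})$ rather than something off by a power of $q$ or with the wrong summation range. But since \eqref{OddId} and \eqref{OddId2} were already established in the proof of Theorem \ref{odd}, and Corollary \ref{cort} is literally \eqref{OddId2}, the honest proof is one line: it is the content of the displayed chain of equalities in the proof of Theorem \ref{odd} together with Corollary \ref{iden}. I would therefore simply write that the identity was obtained in the course of proving Theorem \ref{odd} — specifically it is equation \eqref{OddId2} — and point to that argument, which is the standard way to present a corollary extracted from a proof.
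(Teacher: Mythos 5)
Your proposal is correct and takes essentially the same route as the paper: Corollary \ref{cort} is exactly equation \eqref{OddId2}, established in the proof of Theorem \ref{odd} from Corollary \ref{iden}, the Euler expansions of Lemma \ref{eul}, and the $q$-binomial theorem \eqref{qbin}, and the paper itself gives no separate argument beyond noting that the identity was obtained in that proof. Your fallback of simply citing \eqref{OddId2} is precisely what the authors do.
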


Corollary \ref{cort} will be used in the next section, for the real character degree sums for the finite unitary groups.

\section{Real character degree sums for $U(n,q)$} \label{unitary}

We first establish some notation and results for polynomials, extending some of the notions used in Section \ref{GLnSum}.
Define the maps $\tilde{F}$ and $F$ on $\mathbb{F}_q^{\times}$ by $\tilde{F}(a) = a^{-q}$ and $F(a) = a^q$.  Let $[a]_{\tilde{F}}$ and $[a]_F$ denote the $\tilde{F}$ and $F$-orbits of $a \in \mathbb{F}_q^{\times}$, respectively.  For any $\tilde{F}$-orbit $[a]_{\tilde{F}}$ of size $d$, the \emph{U-irreducible} polynomial corresponding to it is the polynomial
$$\prod_{b \in [a]_{\tilde{F}}} (t - b) = (t - a)(t-a^{-q}) \cdots (t- a^{(-q)^{d-1}}).$$
Then each U-irreducible polynomial is in $\F_{q^2}[t]$.  It is known that if $d$ is odd, the U-irreducible polynomial is irreducible as a polynomial in $\F_{q^2}[t]$, and if $d$ is even, it is the product of two irreducible polynomials (an irreducible and its $\sim$-self-conjugate, as in \cite{FNP}), although we will not need these facts.

Let $\bar{N}(d,q)$ denote the number of U-irreducible polynomials of degree $d$.  As in \cite{TV}, since the group of fixed points of $\tilde{F}^m$ has cardinality $q^m - (-1)^m$, we have
$$ \sum_{r|m} r \bar{N}(r,q) = q^m - (-1)^m,$$
and it follows from M\"{o}bius inversion that we have
$$ \bar{N}(d,q) = \frac{1}{d} \sum_{r|d} \mu(r) (q^{d/r} - (-1)^{d/r}).$$
In particular, when $d>1$ is odd,
$$ \bar{N}(d,q) = \frac{1}{d} \sum_{r|d} \mu(r) (q^{d/r} + 1) = \frac{1}{d} \sum_{r|d} \mu(r) q^{d/r} = N(d,q),$$
where $N(d,q)$ is the number of degree $d$ monic irreducible polynomials over $\mathbb{F}_q$.

It is known that any monic $\sim$-self-conjugate polynomial in $\F_{q^2}[t]$ may be factored uniquely into a product of U-irreducible polynomials, and that the number of monic $\sim$-self-conjugate polynomials of degree $n$ in $\F_{q^2}[t]$ is $q^n + q^{n-1}$ (\cite[Lemma 1.3.11(a)]{FNP}).  Letting $\mathcal{U}$ denote the set of all $U$-irreducible polynomials, it follows that we have
\begin{eqnarray*}
\prod_{d \geq 1} (1 - w^d)^{-\bar{N}(d,q)} & = & \prod_{\phi \in \mathcal{U}} (1 + w^{\mathrm{deg}(\phi)} + w^{2\mathrm{deg}(\phi)} + \cdots ) \\
 & = & 1 + \sum_{n \geq 1} (q^n + q^{n-1}) w^n \\
& = & \frac{1+w}{1 - qw}.
\end{eqnarray*}

Now define the \emph{dual} of a U-irreducible polynomial as follows.  If $\phi$ is the U-irreducible polynomial corresponding to $[a]_{\tilde{F}}$, let the dual $\phi^*$ be defined as the U-irreducible polynomial corresponding to $[a^{-1}]_{\tilde{F}}$.  Let $\bar{N}^*(d,q)$ denote the number of self-dual U-irreducible polynomials of degree $d$ in $\F_{q^2}[t]$, and let $\bar{M}^*(d,q)$ denote the number of unordered pairs $\{ \phi, \phi^* \}$, where $\phi \in \mathcal{U}$, $\mathrm{deg}(\phi) = d$, and $\phi \neq \phi^*$.  By these definitions, we have
$$ \bar{N}(d,q) = \bar{N}^*(d,q) + 2 \bar{M}^* (d,q).$$
A key observation is that for any $a \in \bar{\F}_q^{\times}$, we have
$$ [a]_{\tilde{F}} \cup [a^{-1}]_{\tilde{F}} = [a]_F \cup [a^{-1}]_F,$$
where the orbits on the right correspond to an irreducible polynomial in $\F_q[t]$ and its dual.  From this, it follows that we have, for any $d \geq 1$,
\begin{equation} \label{starbar}
\bar{N}^*(2d,q) + \bar{M}^*(d,q) = N^*(2d,q) + M^*(d,q),
\end{equation}
and that $\bar{N}^*(d,q) = N^*(d,q)$ when $d$ is odd.  In particular, if we define, as in \cite{FNP}, $e=1$ if $q$ is even,
and $e=2$  if $q$ is odd, we have
\begin{equation} \label{starbarodd}
 \bar{N}^*(d,q) = \left\{ \begin{array}{ll} e & \text{ if } d = 1, \\ 0 & \text{ if } d \text{ is odd, } d > 1. \end{array} \right.
\end{equation}

Now, from (\ref{starbar}) and \cite[Lemma 1.3.17]{FNP}, it follows immediately that
\begin{equation} \label{Uprodlem1}
\prod_{d \geq 1} (1 - w^d)^{-\bar{N}^*(2d,q)} \prod_{d \geq 1} (1 - w^d)^{-\bar{M}^*(d,q)} = \frac{(1 - w)^e}{1 - qw},
\end{equation}
and
$$ \prod_{d \geq 1} (1 + w^d)^{-\bar{N}^*(2d,q)} \prod_{d \geq 1} (1 + w^d)^{-\bar{M}^*(d,q)} = \frac{(1 + w)^e(1-qw)}{1 - qw^2}.$$
We may now use the same techniques as in \cite{FNP} to get other identities.  In particular, from the facts that $\prod_{d \geq 1} (1 - w^d)^{-\bar{N}(d,q)} = (1 + w)/(1 - qw)$, $\bar{N}(d,q) = \bar{N}^*(d,q) + 2 \bar{M}^*(d,q)$, and (\ref{starbarodd}), we obtain
$$ \prod_{d \geq 1} (1 - w^{2d})^{-\bar{N}^*(2d,q)} \prod_{d \geq 1} (1 - w^d)^{-2\bar{M}^*(d,q)} = \frac{(1 + w)(1 - w)^e}{1-qw}.$$
From this and (\ref{Uprodlem1}), we have
\begin{equation} \label{Uprodlem2}
\prod_{d \geq 1} (1 + w^d)^{-\bar{N}^*(2d,q)} \prod_{d \geq 1} (1 - w^d)^{-\bar{M}^*(d,q)} = 1 + w.
\end{equation}

We are now able to compute the generating function for the sum of real character degrees for $U(n,q)$.

\begin{theorem} \label{degreesU} Let $e=1$ if the characteristic is even, and $e=2$ if the characteristic is odd. The sum of the degrees of the real characters of $U(n,q)$ is $(-1)^n (q^n - (-1)^n)\cdots (q+1)$ times the coefficient of $u^n$ in
$$ \prod_i \frac{(1 + u/(-q)^i)^e}{(1 + u^2/(-q)^{2i-1})} \prod_{i < j} (1 + u^2/(-q)^{i+j})^{-e+1} \frac{(1 - u^2/(-q)^{i+j})^e}{(1 + u^2/(-q)^{i+j-1})}.$$
\end{theorem}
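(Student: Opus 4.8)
The plan is to mirror the structure of the proof of Theorem~\ref{genfnGL} for $GL(n,q)$, replacing the parametrization of characters of $GL(n,q)$ by the corresponding parametrization for $U(n,q)$ via the Ennola-duality heuristic $q \mapsto -q$, and then tracking the reality condition through the U-irreducible polynomials introduced above. Concretely, the irreducible characters of $U(n,q)$ are parametrized by assigning a partition $\lambda(\phi)$ to each U-irreducible polynomial $\phi$ (equivalently, to each $\tilde{F}$-orbit $[a]_{\tilde{F}}$ with $a \neq$ the appropriate excluded value), subject to $\sum_{\phi} \deg(\phi)\,|\lambda(\phi)| = n$, and the degree of such a character is, up to the global factor $(-1)^n (q^n - (-1)^n)\cdots(q+1)/(-q)^n$, the product $\prod_{\phi} s_{\lambda(\phi)}\bigl(1, 1/(-q)^{\deg(\phi)}, 1/(-q)^{2\deg(\phi)}, \ldots\bigr)$ --- this is the standard substitution $q \mapsto -q$ in Macdonald's degree formula, and I would cite the relevant statements for $U(n,q)$ (Ennola, or the treatment via Deligne--Lusztig theory) just as \cite{Mac} was cited in the $GL$ case. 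The reality criterion is that $\chi$ is real precisely when $\lambda(\phi) = \lambda(\phi^*)$ for all $\phi \in \mathcal{U}$, where $\phi^*$ is the dual defined above; this is the unitary analogue of the Gow--Vinroot / \cite{BT} criterion.

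Granting this, the first step is to write the real character degree sum as $(-1)^n(q^n-(-1)^n)\cdots(q+1)$ times the coefficient of $u^n$ in a product over degrees $d$: for each self-dual U-irreducible polynomial of degree $d$ one gets a factor $\sum_{\lambda} s_{\lambda}\bigl(u^d/(-q)^d, u^d/(-q)^{2d},\ldots\bigr)$, and for each dual pair $\{\phi,\phi^*\}$ of degree $d$ one gets a factor $\sum_{\lambda} s_{\lambda}\bigl(u^d/(-q)^d, \ldots\bigr)^2$ (the two partitions must be equal, contributing a single sum of squares of Schur functions, exactly as in the $GL$ case). Applying the Schur identities \eqref{Schur1} and \eqref{Schur2} turns this into
$$
\prod_d \left[\prod_i \bigl(1 - \tfrac{u^d}{(-q)^{id}}\bigr)^{-1} \prod_{i<j}\bigl(1 - \tfrac{u^{2d}}{(-q)^{(i+j)d}}\bigr)^{-1}\right]^{\bar{N}^*(d,q)} \prod_d \left[\prod_{i,j}\bigl(1 - \tfrac{u^{2d}}{(-q)^{(i+j)d}}\bigr)^{-1}\right]^{\bar{M}^*(d,q)},
$$
which I would then split as $A(u) = B(u)C(u)$ with $B(u)$ collecting the $i = i$ "diagonal" contributions $\prod_i \prod_d (1 - u^d/(-q)^{id})^{-\bar{N}^*(d,q)}(1 - u^{2d}/(-q)^{2id})^{-\bar{M}^*(d,q)}$ and $C(u)$ collecting the $i<j$ contributions, precisely paralleling the decomposition in the proof of Theorem~\ref{genfnGL}.

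The next step is the evaluation of $B(u)$ and $C(u)$. For $B(u)$: since $\bar{N}^*(d,q) = 0$ for odd $d>1$ and $\bar{N}^*(1,q)=e$ by \eqref{starbarodd}, the degree-$1$ self-dual term contributes $\prod_i (1 - u/(-q)^i)^{-e}$, and the remaining product (over even degrees) is handled by \eqref{Uprodlem1} with $w = u^2/(-q)^{2i}$, yielding $\prod_i (1-u/(-q)^i)^{-e}\cdot \frac{(1 - u^2/(-q)^{2i})^e}{1 - q u^2/(-q)^{2i}}$; noting $1 - qu^2/(-q)^{2i} = 1 + u^2/(-q)^{2i-1}$ and $(1-u/(-q)^i)^{-e}(1-u^2/(-q)^{2i})^e = (1 + u/(-q)^i)^e \cdot$(something)$^{-e}$ --- one has to be slightly careful with signs here since $(1-x)(1+x) = 1-x^2$ but $1 - u^2/(-q)^{2i} = (1-u/(-q)^i)(1+u/(-q)^i)$ exactly, so $B(u) = \prod_i (1 + u/(-q)^i)^e / (1 + u^2/(-q)^{2i-1})$. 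For $C(u)$: using both \eqref{Uprodlem1} and \eqref{Uprodlem2} with $w = u^2/(-q)^{i+j}$ (and the fact that $\bar{N}^*(d,q)$ is supported on even $d$ for $d>1$ plus the degree-$1$ anomaly, so that $w^{2d}$-type terms collapse to $w^{2d}$ correctly as in the $GL$ proof), $C(u)$ should come out to $\prod_{i<j}(1 + u^2/(-q)^{i+j})^{-e+1}\frac{(1-u^2/(-q)^{i+j})^e}{1 + u^2/(-q)^{i+j-1}}$ after the same sign bookkeeping. Multiplying $B(u)C(u)$ and absorbing the $1/(-q)^n$ into a shift of the leading constant (using $(-1)^n(q^n-(-1)^n)\cdots(q+1)/(-q)^n = (-1)^n(q^n-(-1)^n)\cdots(q+1) \cdot (-q)^{-n}$, with the $(-q)^{-n}$ already accounted for inside the product as in the $GL$ case) yields the claimed expression.

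The main obstacle I anticipate is not conceptual but bookkeeping: correctly propagating the $q \mapsto -q$ substitution through every factor, since now $(1 - u^2/(-q)^{i+j})$ and $(1 + u^2/(-q)^{i+j})$ are genuinely different (whereas in the $GL$ case squares made signs irrelevant in several places), and in particular the term $(1 + u^2/(-q)^{i+j})^{-e+1}$ has an exponent that vanishes when $e=1$ but not when $e=2$, so the even/odd characteristic cases behave asymmetrically in a way they did not for $GL(n,q)$. I would double-check the $C(u)$ computation by separately verifying the $e=1$ and $e=2$ specializations against the identities \eqref{Uprodlem1} and \eqref{Uprodlem2}, and sanity-check the final formula for $n=1$ (where the real character degree sum of $U(1,q)$ is just the number of self-dual characters of the cyclic group of order $q+1$).
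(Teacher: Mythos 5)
Your overall architecture (parametrization of $\Irr(U(n,q))$ by partition-valued functions on U-irreducible polynomials via \cite{TV}, the reality criterion $\lambda(\phi)=\lambda(\phi^*)$ via \cite{GV}, the split $\tilde A=\tilde B\tilde C$, and the use of \eqref{Uprodlem1} and \eqref{Uprodlem2}) matches the paper, but there is a genuine gap at the very first step that propagates into $\tilde C(u)$ and is not fixable by ``sign bookkeeping'' of the kind you describe. The degree of the character attached to $\{\lambda(\phi)\}$ is \emph{not} obtained from the $GL$ formula by the naive substitution $q\mapsto -q$ inside the Schur specialization. Rewriting the actual degree formula
$(q^n-(-1)^n)\cdots(q+1)\prod_\phi q^{d(\phi)n(\lambda(\phi)')}/\prod_{b}(q^{d(\phi)h(b)}-(-1)^{d(\phi)h(b)})$
using $\sum_b h(b)=n(\lambda)+n(\lambda')+|\lambda|$ produces an extra sign $(-1)^{d(\phi)n(\lambda(\phi))}$ in front of each factor $s_{\lambda(\phi)}(1,(-q)^{-d(\phi)},(-q)^{-2d(\phi)},\ldots)$, because $q^{-dn(\lambda)}=(-1)^{dn(\lambda)}(-q)^{-dn(\lambda)}$. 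Your sign-free expression is already false for $n=2$: for $\lambda=(1^2)$ attached to the degree-one polynomial it returns $-1$ rather than $1$ for the trivial character. This sign is invisible for dual pairs (it appears squared) and for self-dual $\phi$ of even degree, but by \eqref{starbarodd} it survives exactly for the $e$ self-dual polynomials of degree $1$, where the relevant sum is $\sum_\lambda(-1)^{n(\lambda)}s_\lambda(x)$, and one needs the additional identity (\cite[I.5, Ex.\ 6]{Mac})
\begin{equation*}
\sum_{\lambda}(-1)^{n(\lambda)}s_{\lambda}(x)=\prod_i(1-x_i)^{-1}\prod_{i<j}(1+x_ix_j)^{-1},
\end{equation*}
not \eqref{Schur1}.

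Concretely, with the sign included the $d=1$ contribution to your $C(u)$ is $\prod_{i<j}(1+u^2/(-q)^{i+j})^{-e}$, whereas your sign-free setup gives $\prod_{i<j}(1-u^2/(-q)^{i+j})^{-e}$. Running your own subsequent steps (splitting $1-u^{4d}/(-q)^{2d(i+j)}$ and applying \eqref{Uprodlem1} and \eqref{Uprodlem2} with $w=u^2/(-q)^{i+j}$) then yields $\prod_{i<j}(1+w)/(1+u^2/(-q)^{i+j-1})$ instead of the claimed $\prod_{i<j}(1+w)^{-e+1}(1-w)^e/(1+u^2/(-q)^{i+j-1})$; these disagree already for $e=1$, so you would not arrive at the stated generating function. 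Your $\tilde B(u)$ computation is unaffected (the signed and unsigned identities have the same $\prod_i(1-x_i)^{-1}$ part), and the rest of your outline is sound once the sign $(-1)^{d(\phi)n(\lambda(\phi))}$ and the signed Schur identity are incorporated; this is precisely the point at which the unitary case genuinely departs from Theorem~\ref{genfnGL}.
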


\begin{proof} As in \cite{TV}, the irreducible characters of $U(n,q)$ may be parameterized by associating a partition $\lambda(\phi)$ to each U-irreducible polynomial $\phi \in \mathcal{U}$, such that $\sum_{\phi}d(\phi)|\lambda(\phi)| = n$.  It follows from \cite[Theorem 5.1]{TV} that the degree of the character corresponding to the parameters $\lambda(\phi)$ is
$$ (q^n - (-1)^n) \cdots (q + 1) \prod_{\phi} \frac{q^{d(\phi)n(\lambda(\phi)')}}{\prod_{b \in \lambda(\phi)} (q^{d(\phi)h(b)} - (-1)^{d(\phi)h(b)})}.$$
As in the $GL(n,q)$ case, use the identity $\sum_{b \in \lambda} h(b) = n(\lambda) + n(\lambda') + |\lambda|$, and factor appropriately to rewrite the above expression as
\begin{eqnarray*}
& & \frac{(q^n - (-1)^n) \cdots (q+1)}{q^n} \prod_{\phi} (-1)^{d(\phi)n(\lambda(\phi))} \frac{(-1/q)^{d(\phi)n(\lambda(\phi))}}{\prod_{b \in \lambda(\phi)} (1 - (-1/q)^{d(\phi)h(b)})} \\
& = & \frac{(q^n - (-1)^n) \cdots (q+1)}{q^n} \\
& & \cdot \prod_{\phi} (-1)^{d(\phi)n(\lambda(\phi))} s_{\lambda(\phi)} ( 1 , (-1/q)^{d(\phi)}, (-1/q)^{2d(\phi)}, \ldots ).
\end{eqnarray*}

A character of $U(n,q)$ is real-valued exactly when its parameters satisfy $\lambda(\phi) = \lambda(\phi^*)$ for every U-irreducible $\phi$, which follows from \cite[Lemma 3.1]{GV}.  Then the sum of the degrees of the real characters of $U(n,q)$ is $(q^n - (-1)^n)\cdots(q+1)/q^n$ times the coefficient of $u^n$ in

\begin{eqnarray*}
& & \prod_{d \geq 1}\left[ \sum_{\lambda} (-1)^{dn(\lambda)} u^{d|\lambda|} s_{\lambda}(1, (-q)^{-d}, (-q)^{-2d}, \cdots)\right]^{\bar{N}^*(d,q)}\\
& & \cdot \prod_{d \geq 1} \left[ \sum_{\lambda} u^{2d|\lambda|} s_{\lambda}(1, (-q)^{-d}, (-q)^{-2d}, \cdots)^2 \right]^{\bar{M}^*(d,q)},
\end{eqnarray*}
which is $(-1)^n (q^n - (-1)^n)\cdots(q+1)$ times the coefficient of $u^n$ in
\begin{eqnarray*}
& & \prod_{d \geq 1}\left[ \sum_{\lambda} (-1)^{dn(\lambda)} u^{d|\lambda|} s_{\lambda}((-q)^{-d}, (-q)^{-2d}, \cdots) \right]^{\bar{N}^*(d,q)}\\
& & \cdot \prod_{d \geq 1} \left[ \sum_{\lambda} u^{2d|\lambda|} s_{\lambda}((-q)^{-d}, (-q)^{-2d}, \cdots)^2\right]^{\bar{M}^*(d,q)}\\
& = & \prod_{d \geq 1}\left[ \sum_{\lambda} (-1)^{dn(\lambda)} s_{\lambda}(u^d/(-q)^d, u^d/(-q)^{2d}, \cdots) \right]^{\bar{N}^*(d,q)}\\
& & \cdot \prod_{d \geq 1} \left[ \sum_{\lambda} s_{\lambda}(u^d/(-q)^d, u^d/(-q)^{2d}, \cdots)^2 \right]^{\bar{M}^*(d,q)}.
\end{eqnarray*}
Note that we have
\begin{eqnarray*}
& & \prod_{d \geq 1}\left[ \sum_{\lambda} (-1)^{dn(\lambda)} s_{\lambda}(u^d/(-q)^d, u^d/(-q)^{2d}, \cdots) \right]^{\bar{N}^*(d,q)} \\
 & = & \prod_{d \text{ odd}} \left[\sum_{\lambda} (-1)^{n(\lambda)} s_{\lambda}(u^d/(-q)^d, u^d/(-q)^{2d}, \ldots)\right]^{\bar{N}^*(d,q)} \\
& & \cdot \prod_{d \text{ even}} \left[\sum_{\lambda} s_{\lambda}(u^d/(-q)^d, u^d/(-q)^{2d}, \ldots)\right]^{\bar{N}^*(d,q)}
\end{eqnarray*}

Next, we use the two identities for Schur functions \eqref{Schur1} and \eqref{Schur2} as in the $GL(n,q)$ case, but we also must use the identity from \cite[I.5, Ex. 6]{Mac},
$$ \sum_{\lambda} (-1)^{n(\lambda)} s_{\lambda} = \prod_i (1 - x_i)^{-1} \prod_{i < j} (1 + x_i x_j)^{-1}.$$
Applying these identities gives that the sum of the real character degrees of $U(n,q)$ is $(-1)^n (q^n - (-1)^n) \cdots (q+1)$ times the coefficient of $u^n$ in
\begin{eqnarray*}
& & \tilde{A}(u) \\
& : =  & \prod_d \left[ \prod_i (1 - u^d/(-q)^{id})^{-1} \prod_{i < j} (1 - (-1)^d u^{2d}/(-q)^{(i+j)d})^{-1} \right]^{\bar{N}^*(d,q)} \\
& & \cdot \prod_d \left[ \prod_{i,j} (1 - u^{2d}/(-q)^{(i+j)d})^{-1} \right]^{\bar{M}^*(d,q)}.
\end{eqnarray*}

Imitating the calculation for $GL(n,q)$, write $\tilde{A}(u) = \tilde{B}(u) \tilde{C}(u)$, where

$$ \tilde{B}(u) := \prod_i \prod_d (1 - u^d/(-q)^{id})^{-\bar{N}^*(d,q)} (1 - u^{2d}/(-q)^{2id})^{-\bar{M}^*(d,q)}$$
$$ \tilde{C}(u) := \prod_{i < j} \prod_d (1 - (-1)^d u^{2d}/(-q)^{(i+j)d})^{-\bar{N}^*(d,q)} (1 - u^{2d}/(-q)^{(i+j)d})^{-2\bar{M}^*(d,q)}$$
Now, by (\ref{starbarodd}) and (\ref{Uprodlem1}), we have
\begin{eqnarray*}
\tilde{B}(u) & = & \prod_i (1 - u/(-q)^i)^{-e} \prod_d (1 - u^{2d}/(-q)^{2id})^{-\bar{N}^*(2d,q)} \\
& & \cdot (1 - u^{2d}/(-q)^{2id})^{-\bar{M}^*(d,q)} \\
& = & \prod_i (1 - u/(-q)^i)^{-e} \frac{(1 - u^2/(-q)^{2i})^e}{(1 + u^2/(-q)^{2i-1})} \\
& = & \prod_i \frac{(1 + u/(-q)^i)^e}{(1 + u^2/(-q)^{2i-1})}.
\end{eqnarray*}

By (\ref{starbarodd}), (\ref{Uprodlem1}) and (\ref{Uprodlem2}), we compute that $\tilde{C}(u)$ is equal to
\begin{eqnarray*}
& & \prod_{i < j} \left( 1 + \frac{u^2}{(-q)^{i+j}}\right)^{-e} \prod_d \left( 1 - \frac{u^{4d}}{(-q)^{2d(i+j)}} \right)^{-\bar{N}^*(2d,q)} \\
& & \cdot \left( 1 - \frac{u^{2d}}{(-q)^{d(i+j)}} \right)^{-2\bar{M}^*(d,q)} \\
& = & \prod_{i < j} \left( 1 + \frac{u^2}{(-q)^{i+j}}\right)^{-e} \\
& & \cdot \prod_d  \left( 1 - \frac{u^{2d}}{(-q)^{d(i+j)}} \right)^{-\bar{N}^*(2d,q)} \left( 1 - \frac{u^{2d}}{(-q)^{d(i+j)}} \right)^{-\bar{M}^*(d,q)} \\
& & \cdot \prod_d  \left( 1 + \frac{u^{2d}}{(-q)^{d(i+j)}} \right)^{-\bar{N}^*(2d,q)} \left( 1 - \frac{u^{2d}}{(-q)^{d(i+j)}} \right)^{-\bar{M}^*(d,q)}\\
& = & \prod_{i < j} ( 1 + u^2/(-q)^{i+j})^{-e} \frac{(1 - u^2/(-q)^{i+j})^e}{(1 + u^2/(-q)^{i+j-1})} (1 + u^2/(-q)^{i+j}) \\
& = & \prod_{i < j} (1 + u^2/(-q)^{i+j})^{-e+1} \frac{(1 - u^2/(-q)^{i+j})^e}{(1 + u^2/(-q)^{i+j-1})}.
\end{eqnarray*}
We now have
\begin{align*}
\tilde{A}(u) & =  \tilde{B}(u) \tilde{C}(u) \\
&=\prod_i \frac{(1 + u/(-q)^i)^e}{(1 + u^2/(-q)^{2i-1})} \prod_{i < j} (1 + u^2/(-q)^{i+j})^{-e+1} \frac{(1 - u^2/(-q)^{i+j})^e}{(1 + u^2/(-q)^{i+j-1})},
\end{align*}
completing the proof.
\end{proof}

In order to expand the generating function computed above as a series in $u$, we need some notation to state a key result of Warnaar \cite{War}.

For any partition $\lambda$, let $\lambda_{\text{e}}$ and $\lambda_{\text{o}}$ denote the partitions consisting of only the even parts and odd parts of $\lambda$, respectively.  Let $\ell(\lambda)$ be the number of parts of $\lambda$, so that $\ell(\lambda_{\text{o}})$ is the number of odd parts of $\lambda$.  For any positive integer $j$, let $m_j(\lambda)$ be the multiplicity of $j$ in $\lambda$.  For example, if $\lambda = (7, 4, 4, 3, 3, 2, 1, 1, 1)$, then $\lambda_{\text{e}} = (4, 4, 2)$, $\lambda_{\text{o}} = (7, 3, 3, 1, 1, 1)$, $\ell(\lambda) = 9$, $\ell(\lambda_{\text{o}}) = 6$, $m_7 (\lambda) = m_2(\lambda) = 1$, $m_4(\lambda) = m_3 (\lambda) = 2$, and $m_1(\lambda) = 3$.  We will also denote a partition $\mu$ by the notation $(1^{m_1(\mu)} 2^{m_2(\mu)} 3^{m_3(\mu)} \cdots )$, so that the $\lambda$ in the example may be written as $\lambda = (7^1 4^2 3^2 2^1 1^3)$.

Given the variables $x = \{x_1, x_2, \ldots \}$, an indeterminate $t$, and a partition $\lambda$, let $P_{\lambda}(x ;t)$ denote the Hall-Littlewood symmetric function (see \cite[Chapter III]{Mac}).  Given the single variable $z$, and $m \geq 0$ an integer, let $H_m(z ; t)$ denote the Rogers-Szeg\H{o} polynomial (see \cite[Chapter 3, Examples 3-9]{A}), defined as
$$ H_m(z ; t) = \sum_{j=0}^m \qb{m}{j}_t z^j,$$
where, as in Section \ref{GLnSum}, $\qb{m}{j}_t$ is defined to be the following polynomial in $t$:
$$ \qb{m}{j}_t = \frac{(t^m - 1) \cdots (t-1)}{(t^j - 1) \cdots (t-1) (t^{m-j} - 1) \cdots (t-1)}.$$
Given any partition $\lambda$, define the more general Rogers-Szeg\H{o} polynomial $h_{\lambda}(z ; t)$ by
$$ h_{\lambda}(z ; t) = \prod_{i \geq 1} H_{m_i(\lambda)}(z ;t),$$
so that $h_{(1^m)}(z ;t) = H_m(z ; t)$.

We may now state the following identity \cite[Theorem 1.1]{War}.

\begin{theorem}[Warnaar] \label{WarId} The following identity holds, where the sum is over all partitions:
$$ \sum_{\lambda} a^{\ell(\lambda_{\text{o}})} h_{\lambda_{\text{e}}}(ab ; t) h_{\lambda_{\text{o}}}(b/a ; t) P_{\lambda}(x ; t) = \prod_{i \geq 1} \frac{(1 + ax_i)(1 + bx_i)}{(1 - x_i)(1 + x_i)} \prod_{i < j} \frac{1 - tx_i x_j}{1 - x_i x_j}.$$
\end{theorem}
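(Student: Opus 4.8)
The plan is to view Warnaar's identity as a two-parameter refinement of classical Hall--Littlewood analogues of Littlewood's identities and to prove it by induction on the number of variables, using the Hall--Littlewood branching (Pieri) rule. First I would record the sanity checks that pin down all the combinatorial bookkeeping: setting $b=0$ makes every Rogers--Szeg\H{o} factor equal to $1$, and together with $a=1$ the left side must collapse to $\sum_{\lambda} P_{\lambda}(x;t)=\prod_i(1-x_i)^{-1}\prod_{i<j}(1-tx_ix_j)/(1-x_ix_j)$; setting $a=b=0$ kills every $\lambda$ with an odd part and must recover the ``even partitions'' identity $\sum_{\lambda\ \text{even}}P_{\lambda}(x;t)=\prod_i(1-x_i^2)^{-1}\prod_{i<j}(1-tx_ix_j)/(1-x_ix_j)$. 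At $t=0$ both of these are the Schur identities \eqref{Schur1}, \eqref{Schur2} used earlier, which is a convenient consistency check.

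For the base case, with one variable $x_1$, the Hall--Littlewood function $P_{\lambda}(x_1;t)$ vanishes unless $\lambda=(m)$, in which case it equals $x_1^{m}$; moreover $h_{(m)}(z;t)=H_1(z;t)=1+z$ and $\ell((m)_{\text{o}})$ is $1$ or $0$ according as $m$ is odd or even. Hence the left side collapses to $1+(a+b)\sum_{m\ge 1\ \text{odd}}x_1^{m}+(1+ab)\sum_{m\ge 2\ \text{even}}x_1^{m}=\bigl(1+(a+b)x_1+abx_1^2\bigr)/(1-x_1^2)$, which is exactly $(1+ax_1)(1+bx_1)/\bigl((1-x_1)(1+x_1)\bigr)$, the right side with one variable.

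For the inductive step I would write $w_{\lambda}:=a^{\ell(\lambda_{\text{o}})}h_{\lambda_{\text{e}}}(ab;t)h_{\lambda_{\text{o}}}(b/a;t)$, insert the branching rule $P_{\lambda}(x_1,\dots,x_{n+1};t)=\sum_{\mu}\psi_{\lambda/\mu}(t)\,x_{n+1}^{|\lambda|-|\mu|}P_{\mu}(x_1,\dots,x_n;t)$ (the sum over $\mu$ with $\lambda/\mu$ a horizontal strip, $\psi_{\lambda/\mu}(t)$ an explicit product of factors $1-t^{m_i(\mu)}$; see \cite{Mac}), interchange the two summations, and compare the coefficient of each $P_{\mu}(x_1,\dots,x_n;t)$ against the factored right side $R_{n+1}=R_n\cdot\frac{(1+ax_{n+1})(1+bx_{n+1})}{1-x_{n+1}^2}\prod_{i\le n}\frac{1-tx_ix_{n+1}}{1-x_ix_{n+1}}$, re-expanding the last two products in the $P$-basis by a second Pieri step. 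This reduces the whole theorem to a single horizontal-strip summation for the weights $w_{\lambda}$: one must show that, for each fixed $\mu$,
\[
\sum_{\lambda:\ \lambda/\mu\ \text{a horizontal strip}} w_{\lambda}\,\psi_{\lambda/\mu}(t)\,z^{|\lambda|-|\mu|}\ =\ w_{\mu}\,F_{\mu}(z),
\]
with $F_{\mu}(z)$ an explicit rational function whose re-summation against the $P_{\mu}$ produces the coupling product $\prod_{i\le n}(1-tx_ix_{n+1})/(1-x_ix_{n+1})$. Each box of the strip moves a part of $\mu$ from length $j$ to $j+1$, flipping its parity and shifting a multiplicity $m_j\mapsto m_j-1$, $m_{j+1}\mapsto m_{j+1}+1$; tracking the resulting change in $h_{\lambda_{\text{e}}}h_{\lambda_{\text{o}}}$ and in $\ell(\lambda_{\text{o}})$ and summing over admissible strips is exactly where the coefficients $\qb{m}{j}_t$ inside the Rogers--Szeg\H{o} polynomials enter, via the $q$-binomial theorem \eqref{qbin}.

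The main obstacle I anticipate is precisely this last summation: the parity bookkeeping does not decouple over rows, since extending one row can change whether that part is odd or even while $\psi_{\lambda/\mu}$ couples neighbouring columns, so the sum must be organized row by row (or column by column), each local sum recognized as an instance of \eqref{qbin}, and the telescoping across rows checked to reproduce exactly the coupling factor after re-summation against the $P_{\mu}$. A less combinatorial alternative would start from the Hall--Littlewood Cauchy identity $\sum_{\lambda}P_{\lambda}(x;t)Q_{\lambda}(y;t)=\prod_{i,j}(1-tx_iy_j)/(1-x_iy_j)$ together with the known Littlewood-type expansion of $\prod_{i<j}(1-tx_ix_j)/(1-x_ix_j)$ in the $P_{\lambda}$-basis and the standard generating function for the Rogers--Szeg\H{o} polynomials, then convolve these and match coefficients of $P_{\lambda}(x;t)$; there the difficulty simply migrates to isolating the power of $a$ and the $\lambda_{\text{o}}/\lambda_{\text{e}}$ split from the product of the two expansions.
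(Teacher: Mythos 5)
The paper itself offers no proof of this statement: Theorem \ref{WarId} is quoted directly from Warnaar \cite[Theorem 1.1]{War} and then applied, so there is no internal argument to compare yours against; your attempt must stand on its own as a proof of Warnaar's identity, and as it stands it has a genuine gap. The one-variable base case is correct, and the specialization checks are sensible (minor quibble: the $a=b=0$ case at $t=0$ is Littlewood's identity $\sum_{\lambda \text{ even}} s_\lambda(x)=\prod_i(1-x_i^2)^{-1}\prod_{i<j}(1-x_ix_j)^{-1}$, not the Cauchy identity \eqref{Schur2}). But the entire content of the theorem sits in the inductive step that you explicitly defer. Writing $w_\lambda=a^{\ell(\lambda_{\text{o}})}h_{\lambda_{\text{e}}}(ab;t)h_{\lambda_{\text{o}}}(b/a;t)$ and $z=x_{n+1}$, the right-hand side $R_{n+1}=R_n\cdot\frac{(1+az)(1+bz)}{1-z^2}\prod_{i\le n}\frac{1-tx_iz}{1-x_iz}$, expanded via the inductive hypothesis and a Pieri step for the coupling product, has coefficient of $P_\mu(x_1,\dots,x_n;t)$ equal to
\[
\frac{(1+az)(1+bz)}{1-z^2}\sum_{\nu:\ \mu/\nu\ \text{horiz. strip}} w_\nu\,\varphi_{\mu/\nu}(t)\,z^{|\mu|-|\nu|},
\]
with $\varphi$ the companion Pieri coefficients of \cite[III.5]{Mac}, not $w_\mu F_\mu(z)$ for a single rational function $F_\mu$ as you wrote. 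So the reduction you actually need is the two-sided local identity
\[
\sum_{\lambda:\ \lambda/\mu\ \text{horiz. strip}} w_\lambda\,\psi_{\lambda/\mu}(t)\,z^{|\lambda|-|\mu|}
=\frac{(1+az)(1+bz)}{1-z^2}\sum_{\nu:\ \mu/\nu\ \text{horiz. strip}} w_\nu\,\varphi_{\mu/\nu}(t)\,z^{|\mu|-|\nu|},
\]
valid for every partition $\mu$, in which the parity bookkeeping of $\ell(\lambda_{\text{o}})$, the multiplicity factors $1-t^{m_i}$ inside $\psi$ and $\varphi$, and the $t$-binomial coefficients inside the Rogers--Szeg\H{o} polynomials must all mesh.

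That local identity is where the theorem lives: you neither state it in its correct form nor verify it, describing it only as ``the main obstacle,'' and the same difficulty reappears unresolved in your alternative Cauchy-kernel sketch (isolating the power of $a$ and the $\lambda_{\text{o}}/\lambda_{\text{e}}$ split is again the whole problem). Until the displayed relation is proved — for instance multiplicity by multiplicity, organizing each block of equal parts of $\mu$ and recognizing the local sums via \eqref{qbin} — what you have is a plausible strategy plus a verified base case, not a proof. For the purposes of this paper, the honest course is the one the authors take: cite \cite{War} for Theorem \ref{WarId}, or else carry out the strip summation in full.
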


The following specialization of Theorem \ref{WarId} to the case $b=0$ will also be useful for us \cite[Corollary 1.3]{War}.

\begin{cor}[Warnaar] \label{WarCor} The following identity holds, where the sum is over all partitions:
$$ \sum_{\lambda} a^{\ell(\lambda_{\text{o}})} P_{\lambda}(x ; t) = \prod_{i \geq 1} \frac{1 + ax_i}{1 - x_i^2} \prod_{i < j} \frac{1 - tx_i x_j}{1 - x_i x_j}.$$
\end{cor}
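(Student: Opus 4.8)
The plan is to derive Corollary \ref{WarCor} as the $b = 0$ specialization of Warnaar's identity in Theorem \ref{WarId}. The one fact needed beforehand is the value of the Rogers-Szeg\H{o} polynomials at $z = 0$: for every integer $m \geq 0$, only the $j = 0$ term of $\sum_{j=0}^{m} \qb{m}{j}_t z^j$ survives at $z = 0$, so $H_m(0 ; t) = \qb{m}{0}_t = 1$. Hence for any partition $\mu$ we have
$$ h_{\mu}(0 ; t) = \prod_{i \geq 1} H_{m_i(\mu)}(0 ; t) = 1. $$

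With this in hand, I would simply put $b = 0$ in the identity of Theorem \ref{WarId}, regarding $a$ as a fixed nonzero parameter so that $b/a = 0$ is unproblematic. On the left-hand side the factors $h_{\lambda_{\text{e}}}(ab ; t)$ and $h_{\lambda_{\text{o}}}(b/a ; t)$ both collapse to $h_{\,\cdot\,}(0 ; t) = 1$ by the observation above, leaving exactly $\sum_{\lambda} a^{\ell(\lambda_{\text{o}})} P_{\lambda}(x ; t)$. On the right-hand side, the factor $(1 + b x_i)$ becomes $1$, and $(1 - x_i)(1 + x_i) = 1 - x_i^2$, so the infinite product becomes $\prod_{i \geq 1} \frac{1 + a x_i}{1 - x_i^2} \prod_{i < j} \frac{1 - t x_i x_j}{1 - x_i x_j}$, which is precisely the asserted identity.

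There is essentially no obstacle: the argument is a one-line substitution once the evaluation $h_{\mu}(0;t) = 1$ is noted. The only point deserving a word of care is the appearance of the quotient $b/a$ inside $h_{\lambda_{\text{o}}}$ in Theorem \ref{WarId}; since we may hold $a$ fixed and nonzero while sending $b \to 0$ (or, equivalently, expand both sides of Theorem \ref{WarId} as power series in $b$ and compare constant terms), this causes no trouble, and the specialization is legitimate.
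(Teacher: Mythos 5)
Your proposal is correct and is exactly the route the paper intends: Corollary \ref{WarCor} is quoted as the $b=0$ specialization of Theorem \ref{WarId} (following Warnaar's own Corollary 1.3), and your verification that $H_m(0;t)=\qb{m}{0}_t=1$, hence $h_{\mu}(0;t)=1$, together with $(1-x_i)(1+x_i)=1-x_i^2$, is all that is needed. The remark about holding $a\neq 0$ fixed (or comparing constant terms in $b$) properly handles the $b/a$ argument, so there is no gap.
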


\subsection{Characteristic two}
From here on, we define
\[ \omega_n = |U(n,q)| = q^{n(n-1)/2} \prod_{i=1}^n \left(q^i - (-1)^i\right) = q^{n^2} \prod_{i=1}^n \left(1 - (-1/q)^i \right), \] and set $\omega_0 = 1$.

We first concentrate on the case that $q$ is even.  We may compute the number of involutions in $U(n,q)$ as follows.

\begin{prop} \label{involUeven} Let $q$ be even. Then the number of involutions in $U(n,q)$ is equal to
\[ \sum_{r=0}^{\lfloor n/2 \rfloor} \frac{\omega_n}{q^{r(2n-3r)} \omega_r \omega_{n-2r}},\]
and also to $(-1)^{n+{n \choose 2}} (q^n - (-1)^n) \cdots (q+1)$ times the coefficient of $u^n$ in:
\[ \prod_i \frac{1+u/(-q)^i}{1-u^2/(-q)^i}.\]
\end{prop}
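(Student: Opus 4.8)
The plan is to prove the two displayed formulas in turn. First I would obtain the closed sum $\sum_{r}\omega_n/(q^{r(2n-3r)}\omega_r\omega_{n-2r})$ for the involution count by a conjugacy-class argument parallel to the $GL(n,q)$ computation behind \eqref{two}, and then match this sum with the asserted coefficient of $u^n$ by feeding the substitution $q\mapsto -q$ into Corollary~\ref{iden}.

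\emph{First formula.} Since the characteristic is two, $g^2=1$ is the same as $g=1+N$ with $N^2=0$, because $(1+N)^2=1+N^2$. If such a $g$ lies in $U(n,q)$ and preserves the nondegenerate Hermitian form $h$, then expanding $h(gx,gy)=h(x,y)$ gives $h(Nx,y)+h(x,Ny)=h(Nx,Ny)$; taking $y\in\ker N$ shows $\mathrm{Im}\,N\perp\ker N$, and a dimension count then forces $\ker N=(\mathrm{Im}\,N)^\perp$. Hence $W:=\mathrm{Im}\,N$ is totally isotropic, say of dimension $r$, and $r\le\lfloor n/2\rfloor$. For $q$ even one checks that the conjugacy classes of such $g$ in $U(n,q)$ are indexed exactly by $r\in\{0,1,\dots,\lfloor n/2\rfloor\}$ (these are the unipotent classes of Jordan type $(2^r1^{n-2r})$, which do not split in the unitary group), and that the centralizer of a representative of rank $r$ has order $q^{r(2n-3r)}\omega_r\omega_{n-2r}$ — the power of $q$ being characteristic- and type-independent and hence the same as in the $GL$ case, while the reductive part $\gamma_r\gamma_{n-2r}=|GL(r,q)|\,|GL(n-2r,q)|$ is replaced by $\omega_r\omega_{n-2r}=|U(r,q)|\,|U(n-2r,q)|$ coming from the natural action on $W$ and on $W^\perp/W$. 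Orbit--stabilizer then yields the first formula, exactly as \eqref{two} is obtained in the proof of Theorem~\ref{even}. (Alternatively, this count can simply be cited from the literature on involutions in finite classical groups.)

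\emph{Second formula.} Here the work is to substitute $q\mapsto -q$ in Corollary~\ref{iden} and collect signs. The relevant facts are: $\prod_{i=1}^{j}\big((-q)^i-1\big)=(-1)^{\lceil j/2\rceil}\prod_{i=1}^{j}\big(q^i-(-1)^i\big)$, together with $\binom{j}{2}+\lceil j/2\rceil\equiv j\pmod 2$, which gives $\gamma_j(-q)=(-1)^j\omega_j$; next $(-q)^{\binom{n}{2}}=(-1)^{\binom{n}{2}}q^{\binom{n}{2}}$; and $(-q)^{r(2n-3r)}=(-1)^r q^{r(2n-3r)}$ since $r(2n-3r)\equiv r\pmod 2$. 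Substituting, the denominator $(-q)^{r(2n-3r)}\gamma_r(-q)\gamma_{n-2r}(-q)$ simplifies to $(-1)^n q^{r(2n-3r)}\omega_r\omega_{n-2r}$, and Corollary~\ref{iden} becomes
\[ \prod_{i\ge1}\frac{1+u/(-q)^i}{1-u^2/(-q)^i}=\sum_{n\ge0}u^n(-1)^{n+\binom{n}{2}}q^{\binom{n}{2}}\sum_{r=0}^{\lfloor n/2\rfloor}\frac{1}{q^{r(2n-3r)}\omega_r\omega_{n-2r}}. \]
Taking the coefficient of $u^n$, multiplying by $(-1)^{n+\binom{n}{2}}(q^n-(-1)^n)\cdots(q+1)$, and using $(q^n-(-1)^n)\cdots(q+1)=\omega_n/q^{\binom{n}{2}}$ recovers $\sum_{r=0}^{\lfloor n/2\rfloor}\omega_n/(q^{r(2n-3r)}\omega_r\omega_{n-2r})$, i.e.\ the first formula, so the two expressions agree.

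\emph{Main obstacle.} The delicate step is the first one: correctly classifying the characteristic-two unitary involutions and computing their centralizer orders. The sesquilinearity of $h$ makes the auxiliary structure induced on the isotropic subspace $W$ slightly subtle, and one must rule out any further invariant that would split a rank-$r$ class when $q$ is even. By contrast, the passage through Corollary~\ref{iden} in the second step is routine sign bookkeeping and presents no genuine difficulty.
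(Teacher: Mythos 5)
Your proposal is correct and follows essentially the same route as the paper: the paper likewise counts involutions as unipotent classes of type $(2^r 1^{n-2r})$ with centralizer order $q^{r(2n-3r)}\omega_r\omega_{n-2r}$ cited from Wall, and then obtains the generating-function form by substituting $-q$ for $q$ in Corollary \ref{iden}. The only difference is cosmetic: you spell out the sign bookkeeping ($\gamma_j(-q)=(-1)^j\omega_j$, etc.) that the paper packages into the auxiliary quantity $J_n(q)$, and you sketch a justification of the class/centralizer data that the paper simply cites.
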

\begin{proof} Since $q$ is even, an involution in $U(n,q)$ in this case is a unipotent element of type $(2^r 1^{n-2r})$, that is, its elementary divisors are $(t-1)^2$ with multiplicity $r$ and $t-1$ with multiplicity $n-2r$.  It follows from \cite{Wall}, for example, that the centralizer of such an element in $U(n,q)$ has order
$$ q^{r(2n-3r)} \omega_r \omega_{n-2r}.$$
So, the total number of involutions is the sum of the indices of these centralizers in $U(n,q)$, for $r = 0, \ldots, \lfloor n/2 \rfloor$, so that the total number of involutions is
\begin{eqnarray*}
& & \sum_{r=0}^{\lfloor n/2 \rfloor} \frac{\omega_n}{q^{r(2n-3r)} \omega_r \omega_{n-2r}} \\
& = & (-1)^n (q^n - (-1)^n) \cdots (q+1) \sum_{r=0}^{\lfloor n/2 \rfloor} \frac{(-1)^n q^{n(n-1)/2}}{q^{r(2n-3r)} \omega_r \omega_{n-2r}} \\
 & =: & (-1)^n (q^n - (-1)^n) \cdots (q+1) J_n(q).
\end{eqnarray*}
Now note that we have
$$ J_n(-q) = (-1)^{n(n-1)/2} \sum_{r=0}^{\lfloor n/2 \rfloor} \frac{q^{n(n-1)/2}}{q^{r(2n-3r)} \gamma_r \gamma_{n-2r}},$$
From Corollary \ref{iden}, we have $J_n(-q)$ is $(-1)^{n(n-1)/2}$ times the coefficient of $u^n$ in
$$ \prod_i \frac{1 + u/q^i}{1 - u^2/q^i}.$$
Making the substitution of $-q$ for $q$, then, we have that the number of involutions in $U(n,q)$ is
$$(-1)^{n(n-1)/2} (-1)^n (q^n - (-1)^n) \cdots (q+1)$$ times the coefficient of $u^n$ in
$$ \prod_i \frac{1 + u/(-q)^i}{1 - u^2/(-q)^i},$$ which completes the proof.
\end{proof}

By (\ref{invol}), Proposition \ref{involUeven} gives a generating function for
$$\sum_{\chi \in \Irr(G) \atop{ \varepsilon(\chi) = 1}} \chi(1)- \sum_{\chi \in \Irr(G) \atop {\varepsilon(\chi) = -1}} \chi(1),$$
where $G = U(n,q)$, $q$ even.  Since Theorem \ref{degreesU} gives a generating function for
$$\sum_{\chi \in \Irr(G) \atop{ \varepsilon(\chi) = 1}} \chi(1)+\sum_{\chi \in \Irr(G) \atop {\varepsilon(\chi) = -1}} \chi(1),$$
then we may immediately obtain the following (with $e=1$ in Theorem \ref{degreesU}).

\begin{cor} \label{GenFnEpEven} Let $q$ be even.  The sum of the character degrees of $U(n,q)$ with Frobenius-Schur indicator $\pm 1$ is equal to $(-1)^n (q^n - (-1)^n) \cdots (q+1)$ times the coefficient of $u^n$ in
\[ \frac{1}{2} \prod_i \frac{1 + u/(-q)^i}{1 + u^2/(-q)^{2i-1}} \prod_{i < j} \frac{1 - u^2/(-q)^{i+j}}{1+u^2/(-q)^{i+j-1}} \pm \frac{(-1)^{{n \choose 2}}}{2} \prod_i \frac{1 + u/(-q)^i}{1-u^2/(-q)^i} .\]



\end{cor}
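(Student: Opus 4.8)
The plan is to isolate the two sums $\sum_{\varepsilon(\chi)=1}\chi(1)$ and $\sum_{\varepsilon(\chi)=-1}\chi(1)$ by taking the half-sum and half-difference of two generating functions already established. Write $S^{+}=\sum_{\chi\in\Irr(U(n,q)),\ \varepsilon(\chi)=1}\chi(1)$ and $S^{-}=\sum_{\chi\in\Irr(U(n,q)),\ \varepsilon(\chi)=-1}\chi(1)$, so that the quantity to be computed is $S^{\pm}$. Since a character is real-valued exactly when $\varepsilon(\chi)=\pm1$, the sum $S^{+}+S^{-}$ is the full real character degree sum of $U(n,q)$, while by Equation (\ref{invol}) the difference $S^{+}-S^{-}$ equals $\#\{h\in U(n,q)\mid h^{2}=1\}$. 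As $S^{\pm}=\tfrac12\big((S^{+}+S^{-})\pm(S^{+}-S^{-})\big)$, it suffices to express each of these two quantities as $(-1)^{n}(q^{n}-(-1)^{n})\cdots(q+1)$ times the coefficient of $u^{n}$ in an explicit product, and then combine.

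First I would apply Theorem \ref{degreesU} with $e=1$, which is legitimate since $q$ is even. In that case the factor $(1+u^{2}/(-q)^{i+j})^{-e+1}$ becomes $1$ and the factor $(1-u^{2}/(-q)^{i+j})^{e}$ becomes $1-u^{2}/(-q)^{i+j}$, so the generating function collapses to
\[ \prod_{i}\frac{1+u/(-q)^{i}}{1+u^{2}/(-q)^{2i-1}}\prod_{i<j}\frac{1-u^{2}/(-q)^{i+j}}{1+u^{2}/(-q)^{i+j-1}}, \]
and $S^{+}+S^{-}$ is $(-1)^{n}(q^{n}-(-1)^{n})\cdots(q+1)$ times the coefficient of $u^{n}$ in it. This yields the first product in the statement, weighted by $\tfrac12$.

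Next I would invoke Proposition \ref{involUeven}: for $q$ even the number of involutions in $U(n,q)$ is $(-1)^{n+\binom{n}{2}}(q^{n}-(-1)^{n})\cdots(q+1)$ times the coefficient of $u^{n}$ in $\prod_{i}(1+u/(-q)^{i})/(1-u^{2}/(-q)^{i})$, and by (\ref{invol}) this count is exactly $S^{+}-S^{-}$. The one bookkeeping point is to split the prefactor as $(-1)^{n+\binom{n}{2}}=(-1)^{n}(-1)^{\binom{n}{2}}$, so that the common factor $(-1)^{n}(q^{n}-(-1)^{n})\cdots(q+1)$ can be pulled out of both $S^{+}+S^{-}$ and $S^{+}-S^{-}$, leaving the residual sign $(-1)^{\binom{n}{2}}$ attached to the involution generating function. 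Forming $\tfrac12\big((S^{+}+S^{-})\pm(S^{+}-S^{-})\big)$ then produces precisely the displayed expression, with $\pm\tfrac12(-1)^{\binom{n}{2}}$ weighting the second product. There is no genuine obstacle here: the corollary is a formal linear-algebra consequence of Theorem \ref{degreesU}, Proposition \ref{involUeven}, and Equation (\ref{invol}), and the only thing to be careful about is matching the powers of $-1$ in the two prefactors so that a single common factor $(-1)^{n}(q^{n}-(-1)^{n})\cdots(q+1)$ may be extracted cleanly.
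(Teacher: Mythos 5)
Your proposal is correct and matches the paper's own argument: the paper likewise obtains the corollary by combining the generating function of Theorem \ref{degreesU} with $e=1$ (giving $S^{+}+S^{-}$) and Proposition \ref{involUeven} together with Equation (\ref{invol}) (giving $S^{+}-S^{-}$), including the same sign bookkeeping with $(-1)^{\binom{n}{2}}$.
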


We now expand the generating function in Theorem \ref{degreesU} for the case that $q$ is even.

\begin{theorem} \label{UnSumEven} Let $q$ be even.  Then the sum of the degrees of the real-valued characters of $U(n,q)$ is
$$ (q^n - (-1)^n) \cdots (q+1) \sum_{|\lambda| = n} q^{-(\ell(\lambda_{\text{o}}) +n)/2} P_{\lambda}(1, (-q)^{-1}, (-q)^{-2}, \ldots ; q^{-1}).$$
\end{theorem}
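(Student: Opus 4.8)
The plan is to start from the generating function for the real character degree sum of $U(n,q)$ obtained in Theorem~\ref{degreesU} with $e=1$ (characteristic two), namely that the sum equals $(-1)^n(q^n-(-1)^n)\cdots(q+1)$ times the coefficient of $u^n$ in
$$\prod_i \frac{1+u/(-q)^i}{1+u^2/(-q)^{2i-1}} \prod_{i<j} \frac{1-u^2/(-q)^{i+j}}{1+u^2/(-q)^{i+j-1}},$$
and to recognize the right-hand side as a specialization of Warnaar's identity (Corollary~\ref{WarCor}). First I would set $x_i = u/(-q)^i$ for $i \geq 1$ and $t = q^{-1}$, and check that the product side of Corollary~\ref{WarCor} matches the displayed product after a short manipulation: one has $x_i^2 = u^2/q^{2i}$, $x_i x_j = u^2/(-q)^{i+j} \cdot (\pm 1)$, so the factors $1-x_i^2 = 1 - u^2/(-q)^{2i}$ and $1 - t x_i x_j$, $1 - x_i x_j$ need to be reconciled with the factors $1 + u^2/(-q)^{2i-1}$, etc., appearing in Theorem~\ref{degreesU}. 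The key elementary observation is $(1-u^2/(-q)^{2i})/(1+u^2/(-q)^{2i-1}) \cdot \text{(something)}$ telescopes, or rather that under the substitution the Warnaar product collapses to exactly $\tilde A(u)$ with $e=1$; I would verify this factor by factor, being careful with the sign $(-1)^{i+j}$ in $x_i x_j$ when $i+j$ is even versus odd.

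Once the product sides are identified, Corollary~\ref{WarCor} gives
$$\tilde A(u) = \sum_{\lambda} a^{\ell(\lambda_{\text{o}})} P_{\lambda}(x;t) = \sum_\lambda a^{\ell(\lambda_{\text{o}})} P_{\lambda}(u/(-q), u/(-q)^2, \ldots; q^{-1}),$$
and the next step is to choose $a$ correctly and extract the coefficient of $u^n$. Since $P_\lambda$ is homogeneous of degree $|\lambda|$, one has $P_\lambda(u/(-q), u/(-q)^2, \ldots; t) = u^{|\lambda|} P_\lambda(1/(-q), 1/(-q)^2, \ldots; t)$, and by the homogeneity and a shift $P_\lambda(1/(-q), 1/(-q)^2, \ldots;t) = (-q)^{-|\lambda|} P_\lambda(1, (-q)^{-1}, (-q)^{-2}, \ldots; t)$. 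So the coefficient of $u^n$ is $\sum_{|\lambda|=n} a^{\ell(\lambda_{\text{o}})} (-q)^{-n} P_\lambda(1,(-q)^{-1},\ldots; q^{-1})$. Multiplying by the prefactor $(-1)^n(q^n-(-1)^n)\cdots(q+1)$ and absorbing $(-1)^n(-q)^{-n} = q^{-n}$, I need the power of $a$ and this $q^{-n}$ to combine into $q^{-(\ell(\lambda_{\text{o}})+n)/2}$; this forces the choice $a = q^{-1/2}$, so $a^{\ell(\lambda_{\text{o}})} \cdot q^{-n} = q^{-(\ell(\lambda_{\text{o}})+2n)/2}$ — I would double-check the arithmetic here, since the stated exponent is $-(\ell(\lambda_{\text{o}})+n)/2$, which suggests that instead the prefactor only partially cancels, or that $a$ must be chosen so the product side matches with a slightly different normalization of the $x_i$; reconciling the exact exponent is where I'd be most careful.

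The main obstacle I anticipate is precisely this bookkeeping: getting the substitution $x_i \mapsto u/(-q)^i$ (versus $u/(-q)^{i-1}$ or $u^{1/2}$-type substitutions), the value of the parameter $a$, and the homogeneity shifts all to line up so that the product side of Corollary~\ref{WarCor} reproduces $\tilde A(u)$ exactly and the $u^n$-coefficient carries the weight $q^{-(\ell(\lambda_{\text{o}})+n)/2}$. The signs $(-1)^{i+j}$ hidden in $x_i x_j = u^2(-1)^{i+j}/q^{i+j}$ interact with the $1-tx_ix_j$ and $1-x_ix_j$ factors in a way that must exactly produce the alternating pattern $1+u^2/(-q)^{i+j}$ versus $1-u^2/(-q)^{i+j}$ in Theorem~\ref{degreesU}; I would handle this by splitting into the cases $i+j$ even and $i+j$ odd and checking that Warnaar's product telescopes correctly in each. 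Everything after the identification is a routine extraction of a coefficient using homogeneity of Hall--Littlewood polynomials, so no further genuine difficulty is expected.
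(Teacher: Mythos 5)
Your overall plan is the same as the paper's: feed the $e=1$ product from Theorem \ref{degreesU} into Warnaar's Corollary \ref{WarCor} and extract the $u^n$ coefficient by homogeneity. But the step you flag as needing care is precisely where the proof lives, and your proposed specialization does not work, nor do you resolve it. With $x_i = u/(-q)^i$ you get $1 - x_i^2 = 1 - u^2/q^{2i}$, whereas the target factor is $1 + u^2/(-q)^{2i-1} = 1 - u^2/q^{2i-1}$; moreover $1 - x_i x_j = 1 - u^2/(-q)^{i+j}$ would then sit in the \emph{denominator} of Warnaar's product while in Theorem \ref{degreesU} that factor is in the \emph{numerator}, and $1 - t x_i x_j = 1 + u^2/(-q)^{i+j+1}$ misses the target numerator as well. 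No choice of $a$ can repair this, because $a$ only enters the factors $1 + a x_i$; the variables themselves must carry a $q^{1/2}$ shift. The correct specialization is
$$ x_i = -u q^{1/2} (-q)^{-i} = u q^{-1/2} (-q)^{-(i-1)}, \qquad a = -q^{-1/2}, \qquad t = q^{-1},$$
for which $1 + a x_i = 1 + u/(-q)^i$, $1 - x_i^2 = 1 - u^2 q^{1-2i} = 1 + u^2/(-q)^{2i-1}$, $1 - t x_i x_j = 1 - u^2/(-q)^{i+j}$, and $1 - x_i x_j = 1 + u^2/(-q)^{i+j-1}$, reproducing the generating function exactly.

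There is a second gap tied to the first: the sign bookkeeping. With the correct $a = -q^{-1/2}$ (note the sign, which your tentative $a = q^{-1/2}$ drops), homogeneity gives the coefficient of $u^n$ as $\sum_{|\lambda|=n} (-1)^{\ell(\lambda_{\text{o}})} q^{-(\ell(\lambda_{\text{o}})+n)/2} P_{\lambda}(1,(-q)^{-1},(-q)^{-2},\ldots;q^{-1})$, and to pass from the prefactor $(-1)^n(q^n-(-1)^n)\cdots(q+1)$ of Theorem \ref{degreesU} to the stated formula you must observe that $\ell(\lambda_{\text{o}})$ and $|\lambda| = n$ have the same parity, so $(-1)^n(-1)^{\ell(\lambda_{\text{o}})} = 1$. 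This parity cancellation is an essential step of the argument and is absent from your write-up; as it stands, your exponent discrepancy $-(\ell(\lambda_{\text{o}})+2n)/2$ versus $-(\ell(\lambda_{\text{o}})+n)/2$ is a symptom of the wrong normalization of the $x_i$, and acknowledging that it ``must be reconciled'' does not close the argument.
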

\begin{proof} By Theorem \ref{degreesU}, with $e=1$, the sum of the degrees of the real-valued characters of $U(n,q)$ is $(-1)^n (q^n - (-1)^n) \cdots (q+1)$ times the coefficient of $u^n$ in
$$ \prod_i \frac{ 1 + u/(-q)^i}{1 + u^2/(-q)^{2i-1}} \prod_{i < j} \frac{ 1 - u^2/(-q)^{i+j}}{1 + u^2/(-q)^{i + j -1}}.$$
To expand this, we apply Corollary \ref{WarCor}, with the substitutions
$$ x_i = - u q^{1/2} (-q)^{-i} = u q^{-1/2} (-q)^{-(i-1)}, \quad a = -q^{-1/2}, \quad t = q^{-1}.$$
Then we have
\begin{align*}
\prod_{i \geq 1} &\frac{1 + ax_i}{1 - x_i^2} \prod_{i < j} \frac{1 - tx_i x_j}{1 - x_i x_j}  = \prod_i \frac{ 1 + u/(-q)^i}{1 + u^2/(-q)^{2i-1}} \prod_{i < j} \frac{ 1 - u^2/(-q)^{i+j}}{1 + u^2/(-q)^{i + j -1}} \\
 & = \sum_{\lambda} (-q^{-1/2})^{\ell(\lambda_{\text{o}})} P_{\lambda} (u q^{-1/2}, u q^{-1/2} (-q)^{-1}, u q^{-1/2} (-q)^{-2},\ldots; q^{-1}) \\
 & = \sum_{\lambda} (-1)^{\ell(\lambda_{\text{o}})} q^{-(\ell(\lambda_{\text{o}}) + |\lambda|)/2} u^{|\lambda|} P_{\lambda} (1, (-q)^{-1}, (-q)^{-2}, \ldots; q^{-1}).
\end{align*}
The coefficient of $u^n$ in this series is
$$ \sum_{|\lambda| = n} (-1)^{\ell(\lambda_{\text{o}})} q^{-(\ell(\lambda_{\text{o}}) +n)/2} P_{\lambda}(1, (-q)^{-1}, (-q)^{-2}, \ldots ; q^{-1}).$$
The result now follows from the observation that $|\lambda|$ and $\ell(\lambda_{\text{o}})$ have the same parity.
\end{proof}

We can immediately get the following result by applying Theorem \ref{UnSumEven} and Proposition \ref{involUeven}.

\begin{cor} \label{UnSumEven+-}
Let $q$ be even.  Then the sum of the degrees of the characters of $U(n,q)$ with Frobenius-Schur indicator $\pm 1$ is
\begin{align*}
\frac{1}{2} (q^n - (-1)^n) \cdots (q+1) &\sum_{|\lambda| = n}  q^{-(\ell(\lambda_{\text{o}}) +n)/2} P_{\lambda}(1, (-q)^{-1}, (-q)^{-2}, \ldots ; q^{-1}) \\
&\pm \frac{1}{2} \sum_{r=0}^{\lfloor n/2 \rfloor} \frac{\omega_n}{q^{r(2n-3r)} \omega_r \omega_{n-2r}}.
\end{align*}
\end{cor}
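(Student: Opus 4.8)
The plan is to combine Theorem~\ref{UnSumEven} with Proposition~\ref{involUeven} through the Frobenius--Schur identity (\ref{invol}); no new computation is needed, only a two-by-two linear algebra step. Throughout, fix $G = U(n,q)$ with $q$ even, and abbreviate
$$ S_+ = \sum_{\chi \in \Irr(G),\ \varepsilon(\chi) = 1} \chi(1), \qquad S_- = \sum_{\chi \in \Irr(G),\ \varepsilon(\chi) = -1} \chi(1). $$
The quantity the corollary asks for is exactly $S_{\pm}$ in the two cases of the sign.

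First I would record that a character $\chi \in \Irr(G)$ has $\varepsilon(\chi) = \pm 1$ precisely when $\chi$ is real-valued, so that $S_+ + S_-$ is the full sum of the degrees of the real-valued irreducible characters of $U(n,q)$. By Theorem~\ref{UnSumEven}, this sum equals
$$ (q^n - (-1)^n) \cdots (q+1) \sum_{|\lambda| = n} q^{-(\ell(\lambda_{\text{o}}) + n)/2} P_{\lambda}(1, (-q)^{-1}, (-q)^{-2}, \ldots ; q^{-1}). $$
Next, I would apply (\ref{invol}) with $G = U(n,q)$, which gives $S_+ - S_- = \#\{ h \in U(n,q) \mid h^2 = 1 \}$, and then invoke Proposition~\ref{involUeven} to evaluate this count as $\sum_{r=0}^{\lfloor n/2 \rfloor} \omega_n / (q^{r(2n-3r)} \omega_r \omega_{n-2r})$.

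Finally, solving for $S_+$ and $S_-$ from the values of $S_+ + S_-$ and $S_+ - S_-$ gives $S_{\pm} = \tfrac{1}{2}\big[(S_+ + S_-) \pm (S_+ - S_-)\big]$, which upon substituting the two expressions above is exactly the stated formula. I do not expect any genuine obstacle: the substantive content lives in Theorem~\ref{UnSumEven} (the Hall--Littlewood expansion via Warnaar's identity) and Proposition~\ref{involUeven} (the involution count via centralizer orders), both of which may be assumed here. The only point deserving explicit mention is the identification of ``Frobenius--Schur indicator $\pm 1$'' with ``real-valued,'' which is what guarantees that Theorem~\ref{UnSumEven} computes $S_+ + S_-$ rather than some other combination, and the parity remark already used in the proof of Theorem~\ref{UnSumEven} (that $|\lambda|$ and $\ell(\lambda_{\text{o}})$ have the same parity) so that the exponent $(\ell(\lambda_{\text{o}})+n)/2$ is an integer.
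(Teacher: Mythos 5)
Your argument is correct and is exactly the paper's own: Theorem \ref{UnSumEven} supplies $S_+ + S_-$, the Frobenius--Schur relation (\ref{invol}) together with Proposition \ref{involUeven} supplies $S_+ - S_-$, and the stated formula is the half-sum and half-difference. Nothing further is needed.
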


We can express the quantities in Corollary \ref{UnSumEven+-} in yet another way by applying Corollary \ref{GenFnEpEven}.  While the expression we obtain seems to be more complicated, we give an example in which it makes calculation somewhat simpler.

\begin{cor} \label{GenFnEvenAlt} Let $q$ be even.  Then the sum of the degrees of the characters of $U(n,q)$ with Frobenius-Schur indicator $\pm 1$ is $(-1)^n (q^n - (-1)^n) \cdots (q+1)$ times
\begin{align*}
\frac{1 \pm (-1)^{n \choose 2}}{2} & \sum_{r = 0}^{\lfloor n/2 \rfloor} \frac{(-1)^{n + {n \choose 2}} q^{{n \choose 2}}}{q^{r(2n - 3r)} \omega_r \omega_{n-2r}} \\
& + \frac{1}{2} \sum_{k=1}^{\lfloor n/2 \rfloor} \left[ \left(\sum_{\ell(\lambda_{\text{o}}) + |\lambda| = 2k} q^{-k} P_{\lambda} (1, (-q)^{-1}, (-q)^{-2}, \ldots ; q^{-1}) \right) \right. \\
& \quad \cdot \left. \left( \sum_{s = 0}^{\lfloor (n-2k)/2 \rfloor} \frac{(-1)^{n - 2k + { n - 2k \choose 2}} q^{n - 2k \choose 2}}{q^{s(2n-4k-3s)} \omega_{s} \omega_{n-2k-2s}} \right) \right].
\end{align*}
\end{cor}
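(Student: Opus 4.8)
The plan is to start from Corollary~\ref{GenFnEpEven} and to rewrite its coefficient extraction by factoring the ``sum'' generating function that appears there. Write
\[ G_1(u) := \prod_i \frac{1+u/(-q)^i}{1+u^2/(-q)^{2i-1}} \prod_{i<j} \frac{1-u^2/(-q)^{i+j}}{1+u^2/(-q)^{i+j-1}}, \qquad G_2(u) := \prod_i \frac{1+u/(-q)^i}{1-u^2/(-q)^i}, \]
so that Corollary~\ref{GenFnEpEven} says the degree sum in question is $(-1)^n (q^n-(-1)^n)\cdots(q+1)$ times the coefficient of $u^n$ in $\tfrac12 G_1(u) \pm \tfrac12(-1)^{\binom n2} G_2(u)$. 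The first, and easy, step is to observe that $G_1(u) = \Phi(u)\, G_2(u)$, where
\[ \Phi(u) := \prod_i \frac{1-u^2/(-q)^i}{1+u^2/(-q)^{2i-1}}\prod_{i<j}\frac{1-u^2/(-q)^{i+j}}{1+u^2/(-q)^{i+j-1}}; \]
this follows by cancelling the factors $\prod_i(1-u^2/(-q)^i)$ in the quotient $G_1/G_2$.

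The main step is to expand $\Phi(u)$ with Warnaar's Corollary~\ref{WarCor}, taking $t = q^{-1}$, $a = u q^{-1/2}$, and $x_i = u q^{-1/2}(-q)^{-(i-1)}$ (the same $x_i$ as in the proof of Theorem~\ref{UnSumEven}, but with the parameter $a$ now carrying a factor of $u$). A short check of the sign conventions gives $1+ax_i = 1 - u^2/(-q)^i$, $1 - x_i^2 = 1 + u^2/(-q)^{2i-1}$, $1 - tx_ix_j = 1 - u^2/(-q)^{i+j}$ and $1 - x_ix_j = 1 + u^2/(-q)^{i+j-1}$, whence
\[ \Phi(u) = \sum_\lambda (uq^{-1/2})^{\ell(\lambda_{\text{o}})} P_\lambda(uq^{-1/2}, uq^{-1/2}(-q)^{-1}, \ldots ; q^{-1}) = \sum_\lambda u^{\ell(\lambda_{\text{o}})+|\lambda|} q^{-(\ell(\lambda_{\text{o}})+|\lambda|)/2} P_\lambda(1, (-q)^{-1}, (-q)^{-2}, \ldots ; q^{-1}), \]
the second equality by homogeneity of $P_\lambda$. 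Since $\ell(\lambda_{\text{o}})$ and $|\lambda|$ have the same parity, $\Phi(u)$ is a power series in $u^2$ whose coefficient of $u^{2k}$ is $q^{-k}\sum_{\ell(\lambda_{\text{o}})+|\lambda|=2k} P_\lambda(1,(-q)^{-1},(-q)^{-2},\ldots;q^{-1})$, and the $k=0$ coefficient is $1$ (only $\lambda=\emptyset$ contributes).

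To finish, I use that, by Proposition~\ref{involUeven}, the coefficient of $u^m$ in $G_2(u)$ equals $(-1)^{m+\binom m2}\sum_{s=0}^{\lfloor m/2\rfloor} q^{\binom m2}/\big(q^{s(2m-3s)}\omega_s\omega_{m-2s}\big)$ for each $m\geq 0$, obtained by dividing the involution count $\sum_s \omega_m/\big(q^{s(2m-3s)}\omega_s\omega_{m-2s}\big)$ by $(-1)^{m+\binom m2}(q^m-(-1)^m)\cdots(q+1) = (-1)^{m+\binom m2}\omega_m/q^{\binom m2}$. Now extract the coefficient of $u^n$ from $\tfrac12\Phi(u)G_2(u) \pm \tfrac12(-1)^{\binom n2}G_2(u)$ by the Cauchy product: the $k=0$ part of $\tfrac12[u^n]\big(\Phi(u)G_2(u)\big)$ combines with $\pm\tfrac12(-1)^{\binom n2}[u^n]G_2(u)$ to give $\tfrac{1\pm(-1)^{\binom n2}}{2}[u^n]G_2(u)$, which is the first displayed sum in the statement, while the parts $k=1,\ldots,\lfloor n/2\rfloor$ yield $\tfrac12\sum_k \big([u^{2k}]\Phi(u)\big)\big([u^{n-2k}]G_2(u)\big)$, which becomes the stated double sum after substituting the two formulas above (noting $2(n-2k)-3s = 2n-4k-3s$). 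This is exactly the asserted identity.

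The only genuinely new point, and the place to be careful, is the expansion of $\Phi(u)$: one has to recognize that $G_1/G_2$ is again precisely of the form handled by Corollary~\ref{WarCor} and to pick out the correct specialization, which differs from the one in Theorem~\ref{UnSumEven} only in the parameter $a$. Everything else is bookkeeping with the signs buried in powers of $-q$, together with Proposition~\ref{involUeven} and a formal power-series product.
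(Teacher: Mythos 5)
Your proof is correct and follows essentially the same route as the paper: the paper likewise factors the generating function from Corollary \ref{GenFnEpEven} as $\frac12 G_2(u)\bigl(\Phi(u) \pm (-1)^{\binom n2}\bigr)$, expands $\Phi(u)$ via Corollary \ref{WarCor} with $t=q^{-1}$, $a = uq^{-1/2}$ and the same specialization of the $x_i$ (written there as $x_i = -uq^{1/2}(-q)^{-i}$, which coincides with your choice), and reads off coefficients of $G_2$ from Proposition \ref{involUeven} before taking the Cauchy product. No gaps; your sign checks and the parity observation on $\ell(\lambda_{\text{o}})+|\lambda|$ are exactly the points the paper also uses.
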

\begin{proof} By Corollary \ref{GenFnEpEven}, the desired sum is $(-1)^n (q^n - (-1)^n) \cdots (q+1)$ times the coefficient of $u^n$ in
$$\frac{1}{2} \prod_i \frac{1 + u/(-q)^i}{1 + u^2/(-q)^{2i-1}} \prod_{i < j} \frac{1 - u^2/(-q)^{i+j}}{1+u^2/(-q)^{i+j-1}} \pm \frac{(-1)^{{n \choose 2}}}{2} \prod_i \frac{1 + u/(-q)^i}{1-u^2/(-q)^i}$$
\begin{align}
= \frac{1}{2} \prod_i & \frac{1 + u/(-q)^i}{1 - u^2/(-q)^i}  \label{Corprod}\\
&  \cdot  \left( \prod_i \frac{1 - u^2/(-q)^i}{1 + u^2/(-q)^{2i-1}} \prod_{i < j} \frac{1 - u^2/(-q)^{i+j}}{1 + u^2/(-q)^{i + j - 1}} \pm (-1)^{{n \choose 2}} \right). \notag
\end{align}
We now apply Corollary \ref{WarCor} with the substitutions $x_i = -u q^{1/2} (-q)^{-i}$, $a = uq^{-1/2}$, and $t = q^{-1}$ to obtain
\begin{align*}
\prod_i &\frac{1 - u^2/(-q)^i}{1 + u^2/(-q)^{2i-1}} \prod_{i < j} \frac{1 - u^2/(-q)^{i+j}}{1 + u^2/(-q)^{i + j - 1}} \\
&= \sum_{\lambda} u^{\ell(\lambda_{\text{o}}) + |\lambda|} q^{-(\ell(\lambda_{\text{o}}) + |\lambda|)/2} P_{\lambda} (1 , (-q)^{-1}, (-q)^{-2}, \ldots; q^{-1}).
\end{align*}
Noting that $\ell(\lambda_{\text{o}} )+ |\lambda|$ is always even, this gives that the coefficient of $u^{2k}$ in this product is
$$\sum_{\ell(\lambda_{\text{o}}) + |\lambda| = 2k} q^{-k} P_{\lambda} (1 , (-q)^{-1}, (-q)^{-2}, \ldots; q^{-1})$$
if $k \geq 0$ and the coefficient of $u^{2k+1}$, $k \geq 0$, is $0$.  By Proposition \ref{involUeven}, the coefficient of $u^m$ in $\prod_i \frac{1 + u/(-q)^i}{1-u^2/(-q)^i}$ is
$$ \sum_{r=0}^{\lfloor m/2 \rfloor} \frac{ (-1)^{m + {m \choose 2}} q^{m \choose 2}}{q^{r(2m-3r)} \omega_r \omega_{m-2r}}.$$
Substituting these to find the coefficient of $u^n$ in \eqref{Corprod} gives the result.
\end{proof}

While there does exist a nice expression for the value of $P_{\lambda}(1, t, t^2, \ldots; t)$ (see \cite[III.2, Example 1]{Mac}), the authors are unaware of such an expression for $P_{\lambda}(1, t, t^2, \ldots; -t)$.  So, we are unable to further simplify the expression in Theorem \ref{UnSumEven}.  However, there are nice evaluations of both $P_{(1^m)}(x ; t)$ and $P_{(m)}(x ; t)$ which we may apply.  In particular, $P_{(1^m)}(x ; t) = e_m(x)$, where $e_m(x)$ is the elementary symmetric function, by \cite[III.2, Equation (2.8)]{Mac}.  Then, by \cite[I.2, Example 4]{Mac}, we have
\begin{align} \label{P1^m}
P_{(1^m)}(1, (-q)^{-1}, (-q)^{-2}, \ldots; q^{-1}) & = e_m(1, (-q)^{-1}, (-q)^{-2}, \ldots)  \notag\\
 = \frac{(-q)^{-m(m-1)/2}}{(1 + (1/q)) \cdots (1  - (-1/q)^m)} & = \frac{(-1)^{m(m-1)/2} q^m}{(q+1) \cdots (q^m - (-1)^m)}.
\end{align}
By \cite[III.2, Equation (2.10)]{Mac}, $P_{(m)}(x ;t)$ is $(1-t)^{-1}$ times the coefficient of $u^m$ in $\prod_i \frac{1-x_itu}{1 - x_i u}$.  So, $P_{(m)}(1, (-q)^{-1}, (-q)^{-2}, \ldots; q^{-1})$ is $(1 - (1/q))^{-1}$ times the coefficient of $u^m$ in
$$ \frac{\prod_{i \geq 1} (1 + u/(-q)^i)}{\prod_{i \geq 1} (1 - u/(-q)^{i-1})}.$$
By Lemma \ref{eul},
$$\prod_{i \geq 1} (1 + u/(-q)^i) = \sum_{r \geq 0} \frac{(-1)^{r(r+1)/2} u^r}{(q+1) \cdots (q^r - (-1)^r)},$$
and
$$ \prod_{i \geq 1} (1 - u/(-q)^{i-1})^{-1} = \sum_{r \geq 0} \frac{q^{r(r+1)/2} u^r}{(q+1) \cdots (q^r - (-1)^r)}.$$
So, we have
\begin{align} \label{Pm}
P_{(m)} & (1, (-q)^{-1}, (-q)^{-2}, \ldots ; q^{-1}) \notag \\
& = \frac{q}{q-1} \sum_{r = 0}^m \frac{(-1)^{r(r+1)/2} q^{(m-r)(m-r+1)/2}}{(q+1) \cdots (q^r - (-1)^r) (q+1) \cdots (q^{m-r} - (-1)^{m-r})}.
\end{align}

For example, to compute the sum of the real character degrees of $U(2,q)$, for $q$ even, then by Theorem \ref{UnSumEven}, the only partitions in the sum are $\lambda = (2)$ and $\lambda = (1^2)$, so this sum is
\begin{align*}
(q^2 - 1)(q+1) \left[q^{-1} \right. & P_{(2)} ( 1, (-q)^{-1}, (-q)^{-2}, \ldots; q^{-1})  \\
& \left. + q^{-2} P_{(1^2)} ( 1, (-q)^{-1}, (-q)^{-2}, \ldots; q^{-1}) \right].
\end{align*}
Directly from \eqref{Pm} and \eqref{P1^m}, we may compute that
$$P_{(2)}(1, (-q)^{-1}, (-q)^{-2}, \ldots; q^{-1}) = \frac{q(q^2+1)}{(q+1)(q^2 -1)}, \quad \text{ and }$$
$$P_{(1^2)}(1, (-q)^{-1}, (-q)^{-2}, \ldots; q^{-1}) = \frac{-q^2}{(q+1)(q^2 - 1)},$$
from which it follows that the sum of the real character degrees of $U(2,q)$, $q$ even, is $q^2$.  By Proposition \ref{involUeven}, this is also the number of involutions in $U(2,q)$, meaning that $\varepsilon(\chi) = 1$ for every real-valued irreducible character $\chi$ of $U(2,q)$, $q$ even.  This is mentioned in the last paragraph of a paper of Gow \cite{Gow}, and this is also implied by a result of Ohmori \cite[Theorem 7(ii)]{Ohm81} (every character of $U(2,q)$ is either regular or semisimple, and the result states that such real-valued characters satisfy $\varepsilon(\chi) = 1$ when $q$ is even).

Now consider the sum of the degrees of the characters of $U(3,q)$, $q$ even, with Frobenius-Schur indicator 1.  Note that if we apply Corollary \ref{UnSumEven+-}, we need the value of $P_{\lambda}(1, (-q)^{-1}, (-q)^{-2}, \ldots ; q^{-1})$ for $\lambda = (3), (2,1),$ and $(1^3)$.  However, using Corollary \ref{GenFnEvenAlt}, we only need this value for $\lambda =(1)$ and $(2)$.  That is, Corollary \ref{GenFnEvenAlt} makes this calculation a bit easier, and from that result the sum of the degrees of the characters of $U(3,q)$ with Frobenius-Schur indicator 1 is equal to $-(q^3 +1)(q^2 - 1)(q+1)$ times
$$ \frac{1}{2} \left(\sum_{\ell(\lambda_{\text{o}}) + |\lambda| = 2} q^{-1} P_{\lambda}(1, (-q)^{-1}, (-q)^{-2}, \ldots ; q^{-1}) \right) \left( \frac{-1}{q+1} \right)$$
$$ = \frac{ P_{(1)}(1, (-q)^{-1}, (-q)^{-2}, \ldots; q^{-1}) + P_{(2)}(1, (-q)^{-1}, (-q)^{-2}, \ldots ; q^{-1})}{-2q(q+1)}$$
$$ = \frac{-1}{2q (q+1)} \left( \frac{q}{q+1} + \frac{q(q^2 + 1)}{(q+1)(q^2 - 1)} \right) = \frac{-q^2}{(q+1)^2 (q^2 - 1)}.$$
 This gives that this character degree sum is $q^4 - q^3 + q^2$.  By Proposition \ref{involUeven}, this character degree sum, minus the sum of the degrees of characters with Frobenius-Schur indicator $-1$, is $q^4 -q^3 +q$.  So, the sum of the degrees of characters with Frobenius-Schur indicator $-1$ is $q^2 -q$.  By the result of Ohmori \cite[Theorem 7(ii)]{Ohm81}, every real-valued semisimple and regular character of $U(3,q)$ has Frobenius-Schur indicator $1$, and the only other real-valued character is the unique cuspidal unipotent character, which has degree $q^2 - q$, and this character must thus be the unique character of $U(3,q)$ with Frobenius-Schur indicator $-1$.  The fact that this unipotent character has Frobenius-Schur indicator $-1$ is also consistent with the general result for unipotent characters \cite{Ohm}.

\subsection{Odd characteristic}

As in the case that $q$ is even, we begin by counting the involutions in $U(n,q)$ when $q$ is odd.

\begin{prop}  \label{involUodd} Let $q$ be odd. Then the number of involutions in $U(n,q)$ is equal to
$$\sum_{r=0}^n \frac{\omega_n}{\omega_r \omega_{n-r}}$$
and also to $(-1)^{n+{n \choose 2}} (q^n - (-1)^n) \cdots (q+1)$ times the coefficient of $u^n$ in:
\[ \prod_i \frac{(1+u/(-q)^i)^2}{1-u^2/(-q)^i}.\]
\end{prop}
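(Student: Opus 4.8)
The plan is to mirror the proof of Proposition \ref{involUeven}, replacing its count of unipotent involutions by a count of \emph{semisimple} involutions (valid since the characteristic is now odd), and replacing Corollary \ref{iden} by Corollary \ref{cort}.

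First I would obtain the closed form $\sum_{r=0}^n \omega_n/(\omega_r\omega_{n-r})$ directly. Since $q$ is odd, an element $g \in U(n,q)$ with $g^2 = 1$ has minimal polynomial dividing $(t-1)(t+1)$, a product of distinct linear factors over $\F_q$, so the underlying Hermitian space $V = \F_{q^2}^n$ splits as $V_+ \oplus V_-$ into its $+1$- and $-1$-eigenspaces, say of dimensions $r$ and $n-r$. For $v \in V_+$ and $w \in V_-$, invariance of the Hermitian form gives $\langle v, w\rangle = \langle gv, gw\rangle = \langle v, -w\rangle = -\langle v, w\rangle$ (using that $-1 \in \F_q$ is fixed by the conjugation), so, as the characteristic is odd, $V_+ \perp V_-$. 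Hence each $V_\pm$ is a non-degenerate Hermitian space, and the centralizer of $g$ in $U(n,q)$ consists exactly of the isometries preserving both $V_+$ and $V_-$, i.e.\ it is $U(r,q) \times U(n-r,q)$, of order $\omega_r\omega_{n-r}$. Summing the indices of these centralizers as $r$ ranges over $0,1,\ldots,n$ (the $r=0$ and $r=n$ terms accounting for $g=-1$ and $g=1$) gives the first formula.

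For the generating function I would introduce, in parallel with $J_n$ in the even case, the quantity $K_n(q) := \sum_{r=0}^n (-1)^n q^{n(n-1)/2}/(\omega_r\omega_{n-r})$, so that
\[ \sum_{r=0}^n \frac{\omega_n}{\omega_r\omega_{n-r}} = (-1)^n (q^n - (-1)^n)\cdots(q+1)\, K_n(q). \]
Substituting $-q$ for $q$ sends $\omega_j$ to $(-1)^j\gamma_j$ and $q^{n(n-1)/2}$ to $(-1)^{n(n-1)/2}q^{n(n-1)/2}$, and a short parity check (the denominator signs $(-1)^r(-1)^{n-r}$ cancel the $(-1)^n$ in the numerator) yields
\[ K_n(-q) = (-1)^{n(n-1)/2} q^{n(n-1)/2} \sum_{r=0}^n \frac{1}{\gamma_r \gamma_{n-r}}. \]
By Corollary \ref{cort}, $\sum_{r=0}^n 1/(\gamma_r\gamma_{n-r})$ equals $q^{-\binom{n}{2}}$ times the coefficient of $u^n$ in $\prod_{i}(1+u/q^i)^2/(1-u^2/q^i)$, and since $q^{n(n-1)/2}q^{-\binom{n}{2}} = 1$ this shows $K_n(-q) = (-1)^{n(n-1)/2}$ times that coefficient. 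Replacing $q$ by $-q$ throughout this last identity gives that $K_n(q)$ is $(-1)^{n(n-1)/2}$ times the coefficient of $u^n$ in $\prod_i (1+u/(-q)^i)^2/(1-u^2/(-q)^i)$; substituting this into the displayed factorization, together with $(-1)^n(-1)^{n(n-1)/2} = (-1)^{n+\binom{n}{2}}$, gives the claimed generating-function expression.

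Everything here is routine given Corollary \ref{cort} and Proposition \ref{involUeven}; the only delicate points are the sign bookkeeping under $q \mapsto -q$ (exactly as in the even case) and the linear-algebra claim that the $\pm 1$-eigenspaces of an involution are orthogonal and non-degenerate. The latter is where $q$ odd is genuinely used, and is the step I would be most careful about.
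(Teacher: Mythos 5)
Your proposal is correct and takes essentially the same route as the paper's proof: the same count of involutions as a sum of centralizer indices $\sum_{r=0}^n \omega_n/(\omega_r\omega_{n-r})$ (where the paper simply cites Wall for the centralizer $U(r,q)\times U(n-r,q)$, you supply the short orthogonal-eigenspace argument), the same auxiliary quantity (your $K_n$ is the paper's $I_n$), the same sign bookkeeping under $q\mapsto -q$, and the same appeal to Corollary \ref{cort}. No gaps.
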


\begin{proof} For $q$ odd, an involution in $U(n,q)$ has eigenvalues $1$ and $-1$, with each Jordan block having size $1$.  If such an element has eigenvalue $1$ with multiplicity $r$ and $-1$ with multiplicity $n-r$, then it is conjugate over an algebraic closure to a diagonal matrix.  Such an element has centralizer isomorphic to $U(r,q) \times U(n-r,q)$, by \cite{Wall}.  Thus, the total number of involutions in $U(n,q)$, $q$ odd is
\begin{align*}
\sum_{r=0}^n \frac{\omega_n}{\omega_r \omega_{n-r}} & = (-1)^n (q^n - (-1)^n) \cdots (q+1) \sum_{r=0}^n \frac{(-1)^n q^{n(n-1)/2}}{\omega_r \omega_{n-r}} \\
       &=: (-1)^n (q^n - (-1)^n) \cdots (q+1) I_n(q).
\end{align*}
Then one checks that we have
$$ I_n(-q) =(-1)^{n(n-1)/2} \sum_{r=0}^n \frac{q^{n(n-1)/2}}{\gamma_r \gamma_{n-r}}.$$
Now, we know from Corollary \ref{cort} that $\sum_{r=0}^{n} \frac{q^{n(n-1)/2}}{\gamma_r \gamma_{n-r}}$ is the coefficient of $u^n$ in
$$ \prod_i \frac{(1 + u/q^i)^2}{1-u^2/q^i},$$ and by substituting $-q$ for $q$, the result follows.
\end{proof}

By precisely the same argument as in the $q$ even case, we obtain the following result.

\begin{cor} \label{GenFnEpOdd} Let $q$ be odd.  The sum of the degrees of $U(n,q)$ with Frobenius-Schur indicator $\pm 1$ is equal to $(-1)^n (q^n - (-1)^n) \cdots (q+1)$ times the coefficient of $u^n$ in
\begin{eqnarray*}
& & \frac{1}{2}  \prod_i \frac{(1 + u/(-q)^i)^2}{1 + u^2/(-q)^{2i-1}} \prod_{i < j} \frac{(1-u^2/(-q)^{i+j})^2}{(1 + u^2/(-q)^{i+j})(1+u^2/(-q)^{i+j-1})} \\
& & \pm \frac{(-1)^{{n \choose 2}}}{2} \prod_i \frac{(1 + u/(-q)^i)^2}{1-u^2/(-q)^i}.
\end{eqnarray*}
\end{cor}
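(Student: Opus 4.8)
The plan is to imitate, line for line, the derivation of Corollary \ref{GenFnEpEven} in the even case, simply substituting the odd-characteristic inputs. Write $D^{+} = \sum_{\chi} \chi(1)$ over those $\chi \in \Irr(U(n,q))$ with $\varepsilon(\chi) = 1$, and $D^{-} = \sum_{\chi}\chi(1)$ over those with $\varepsilon(\chi) = -1$. Since a character is real-valued exactly when $\varepsilon(\chi) = \pm 1$, the sum $D^{+} + D^{-}$ is the sum of the degrees of the real-valued characters of $U(n,q)$, and by Theorem \ref{degreesU} with $e = 2$ (as $q$ is odd) it equals $(-1)^n (q^n - (-1)^n)\cdots(q+1)$ times the coefficient of $u^n$ in
\[ \prod_i \frac{(1 + u/(-q)^i)^2}{1 + u^2/(-q)^{2i-1}} \prod_{i<j} \frac{(1-u^2/(-q)^{i+j})^2}{(1+u^2/(-q)^{i+j})(1+u^2/(-q)^{i+j-1})}, \]
where the exponent $-e+1 = -1$ on the factor $(1+u^2/(-q)^{i+j})$ has been absorbed into the denominator.

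Next I would invoke Equation (\ref{invol}), which gives $D^{+} - D^{-} = \#\{ h \in U(n,q) \mid h^2 = 1\}$, and then Proposition \ref{involUodd}, which expresses this involution count as $(-1)^{n + \binom{n}{2}}(q^n - (-1)^n)\cdots(q+1)$ times the coefficient of $u^n$ in $\prod_i (1+u/(-q)^i)^2/(1-u^2/(-q)^i)$. Using $(-1)^{n+\binom{n}{2}} = (-1)^n (-1)^{\binom{n}{2}}$, this is $(-1)^n(q^n-(-1)^n)\cdots(q+1)$ times $(-1)^{\binom{n}{2}}$ times that coefficient of $u^n$. The final step is purely algebraic: from $D^{\pm} = \tfrac12\big[(D^{+}+D^{-}) \pm (D^{+}-D^{-})\big]$, substitute the two generating-function expressions, factor out the common $(-1)^n(q^n-(-1)^n)\cdots(q+1)$, and read off that $D^{\pm}$ is that common factor times the coefficient of $u^n$ in $\tfrac12 F(u) \pm \tfrac{(-1)^{\binom{n}{2}}}{2} G(u)$, with $F(u)$ and $G(u)$ the two products above; this is exactly the claimed formula.

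I do not expect a genuine obstacle here, since everything reduces to a formal combination of Theorem \ref{degreesU}, Proposition \ref{involUodd}, and (\ref{invol}). The one point that deserves a moment of care — and the reason the statement is phrased with an explicit $(-1)^{\binom{n}{2}}$ rather than a clean product — is that this sign depends on $n$ and hence cannot be literally absorbed into the power series $G(u)$; since one extracts only the single coefficient $[u^n]$, the sign is simply carried along as written, exactly as in Corollary \ref{GenFnEpEven}.
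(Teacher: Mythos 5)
Your proposal is correct and follows precisely the route the paper takes: the paper derives Corollary \ref{GenFnEpOdd} ``by precisely the same argument as in the $q$ even case,'' i.e.\ combining Theorem \ref{degreesU} (with $e=2$) for $D^{+}+D^{-}$ with Proposition \ref{involUodd} and Equation (\ref{invol}) for $D^{+}-D^{-}$, then taking the half-sum and half-difference with the sign $(-1)^{\binom{n}{2}}$ carried along exactly as you describe. Your handling of the exponent $-e+1=-1$ and of the factor $(-1)^{n+\binom{n}{2}}$ matches the intended computation, so there is nothing to add.
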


We now expand the generating function from Theorem \ref{degreesU} when $q$ is odd.  In this case, we need the more general result Theorem \ref{WarId} rather than Corollary \ref{WarCor} as in the $q$ even case.  We need just a bit more notation.  For any partition $\nu$, let $\nu'$ denote the conjugate partition of $\nu$.  A partition is said to be {\it even} if all of its parts are even.  For any $c,d$, and any integer $m \geq 1$, we define $(c ; d)_m$ by
$$ (c ; d)_m = (1 - c)(1 - cd) \cdots (1 - cd^{m-1}).$$ We set $(c;d)_0=1$. Finally, we note that $P_{\lambda}(x ; -1)$ is a symmetric function studied in \cite[III.8]{Mac}.

In the following, we give two expressions for the sum of the real character degrees for $U(n,q)$ with $q$ odd.  While the first is a bit more notationally manageable, the second could be considered computationally advantageous as it requires fewer special values of Rogers-Szeg\H{o} polynomials which have no convenient factorization.

\begin{theorem} \label{UnSumOdd} Let $q$ be odd.  Then the sum of the degrees of the real-valued characters of $U(n,q)$ is $(-1)^n (q^n - (-1)^n) \cdots (q+1)$ times
\begin{align*}
\sum_{|\lambda|+|\nu| = n \atop{ \nu' \text{ even}}} &  (-1)^{|\nu|/2 + \ell(\lambda_{\text{o}})} q^{-|\nu| - (\ell(\lambda_{\text{o}}) + |\lambda|)/2} 2^{\ell(\nu)/2} h_{\lambda_{\text{e}}}(q^{-1} ; q^{-1}) h_{\lambda_{\text{o}}}(1 ; q^{-1}) \\
& \cdot  P_{\lambda}(1, (-q)^{-1}, (-q)^{-2}, \ldots; q^{-1}) P_{\nu} (1, (-q)^{-1}, (-q)^{-2}, \ldots;-1) \\
= \sum_{|\lambda| + |\nu| = n \atop{ (\lambda_{\text{o}})', (\nu_{\text{e}})' \text{ even}}} & (-1)^{(\ell(\lambda_{\text{o}}) + \ell(\nu_{\text{o}}) + |\nu|)/2} q^{-|\nu| - (\ell(\lambda_{\text{o}}) + |\lambda|)/2} \left(\prod_i 2^{\lceil m_i(\nu)/2 \rceil} \right) \\
& \cdot h_{\lambda_{\text{e}}} (q^{-1} ; q^{-1}) \left( \prod_i (q^{-1} ; q^{-2})_{m_i(\lambda_{\text{o}})/2} \right) \\
& \cdot  P_{\lambda}(1, (-q)^{-1}, (-q)^{-2}, \ldots; q^{-1}) P_{\nu} (1, (-q)^{-1}, (-q)^{-2}, \ldots;-1)
\end{align*}

\end{theorem}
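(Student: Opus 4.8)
The plan is to expand the generating function from Theorem \ref{degreesU} (with $e=2$) using Warnaar's identity Theorem \ref{WarId} rather than its $b=0$ specialization, since in odd characteristic the product over $i<j$ carries exponent $(-e+1)+e = $ mixed factors and, more importantly, the factor $\prod_i (1+u/(-q)^i)^2$ forces two linear factors $(1+ax_i)(1+bx_i)$ in the numerator. First I would record that Theorem \ref{degreesU} says the desired sum is $(-1)^n(q^n-(-1)^n)\cdots(q+1)$ times the coefficient of $u^n$ in
$$\prod_i \frac{(1+u/(-q)^i)^2}{1+u^2/(-q)^{2i-1}} \prod_{i<j} \frac{1-u^2/(-q)^{i+j}}{(1+u^2/(-q)^{i+j})(1+u^2/(-q)^{i+j-1})}.$$
The key algebraic observation is that this does not directly match the right side of Theorem \ref{WarId}, because there the $i<j$ product is $\prod_{i<j}\frac{1-tx_ix_j}{1-x_ix_j}$ with no extra $(1+x_ix_j)^{-1}$, and the $i$-product denominator is $(1-x_i)(1+x_i) = 1-x_i^2$, not $(1+u^2/(-q)^{2i-1})$. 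So I would split off a factor: write the displayed function as the product of
$$\prod_i \frac{(1+u/(-q)^i)^2}{1-u^2/(-q)^i}\prod_{i<j}\frac{1-u^2/(-q)^{i+j}}{1-u^2/(-q)^{i+j-1}} \quad\text{(this is a Warnaar-type piece, à la Theorem \ref{WarId})}$$
times the correction $\prod_i \frac{1-u^2/(-q)^i}{1+u^2/(-q)^{2i-1}}\prod_{i<j}\frac{1-u^2/(-q)^{i+j-1}}{1+u^2/(-q)^{i+j-1}}\prod_{i<j}\frac{1}{1+u^2/(-q)^{i+j}}$, and recognize the correction factor, after collecting the telescoping $i\le j$ versus $i<j$ indices, as a single product of the shape $\prod_{k}(1+u^2/(-q)^k)^{-c_k}$ with $c_k$ depending on parity of $k$ — which is exactly the Warnaar right side for the $P_\nu(\,\cdot\,;-1)$ specialization (recall $P_\nu(x;-1)$ from \cite[III.8]{Mac}), since $t=-1$ collapses $\frac{1-tx_ix_j}{1-x_ix_j}=\frac{1+x_ix_j}{1-x_ix_j}$.

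The main steps, in order: (1) apply Theorem \ref{WarId} to the first piece with substitutions $x_i = uq^{-1/2}(-q)^{-(i-1)}$ (so $x_i^2 = u^2/(-q)^{2i-1}$... wait, I must track signs carefully — more precisely $x_i = -uq^{1/2}(-q)^{-i}$ as in Theorem \ref{UnSumEven}), $t=q^{-1}$, and $a=b=-q^{-1/2}$, producing $\sum_\lambda (-q^{-1/2})^{\ell(\lambda_{\text{o}})} h_{\lambda_{\text{e}}}(q^{-1};q^{-1}) h_{\lambda_{\text{o}}}(1;q^{-1}) P_\lambda(\cdots;q^{-1})$ with the factor $a^{\ell(\lambda_{\text{o}})}$ pulled out and $h_{\lambda_{\text{e}}}(ab;t)=h_{\lambda_{\text{e}}}(q^{-1};q^{-1})$, $h_{\lambda_{\text{o}}}(b/a;t)=h_{\lambda_{\text{o}}}(1;q^{-1})$; (2) apply Corollary \ref{WarCor} or Theorem \ref{WarId} at $t=-1$ to the correction factor — here the constraint that $\nu'$ be even emerges because $P_\nu(x;-1)=0$ unless $\nu$ has the self-conjugate-column structure forced by $\prod_{i<j}(1+x_ix_j)/(1-x_ix_j)$, i.e. $\nu'$ even, and the power of $2$, namely $2^{\ell(\nu)/2}$, comes from $H_0$ and the $t=-1$ values of Rogers–Szegő type factors appearing in the $P_\nu(\,\cdot\,;-1)$ expansion; (3) multiply the two series, extract the coefficient of $u^n$ (giving the constraint $|\lambda|+|\nu|=n$), and track the accumulated powers of $q$ and signs — $(-1)^{\ell(\lambda_{\text{o}})}$ from step (1), $(-1)^{|\nu|/2}$ and $q^{-|\nu|}$ from step (2), and $q^{-(\ell(\lambda_{\text{o}})+|\lambda|)/2}$ from the homogeneity of $P_\lambda$ — to arrive at the first displayed formula; (4) for the second displayed formula, rewrite $h_{\lambda_{\text{o}}}(1;q^{-1}) = \prod_i H_{m_i(\lambda_{\text{o}})}(1;q^{-1})$ using the evaluation $H_m(1;t)$ (which vanishes unless $m$ is even and equals $\prod (t;t^2)$-type products otherwise, by the Rogers–Szegő/Galois-number identities in \cite[Chapter 3]{A}), giving the condition $(\lambda_{\text{o}})'$ even and the factor $\prod_i(q^{-1};q^{-2})_{m_i(\lambda_{\text{o}})/2}$, and similarly re-index $P_\nu(\,\cdot\,;-1)$ via $\nu_{\text{e}}$ and $\nu_{\text{o}}$ to get $(\nu_{\text{e}})'$ even, the $\prod_i 2^{\lceil m_i(\nu)/2\rceil}$, and the revised sign $(-1)^{(\ell(\lambda_{\text{o}})+\ell(\nu_{\text{o}})+|\nu|)/2}$.

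The hard part will be step (2) — correctly identifying the correction factor as a Warnaar/Hall–Littlewood sum at $t=-1$ and pinning down the exact power of $2$ and the sign. The subtlety is that $P_\nu(x;-1)$ does not literally appear on the right side of Theorem \ref{WarId} as written (which has generic $t$), so I must either invoke the known $t=-1$ theory of Hall–Littlewood functions from \cite[III.8]{Mac} directly or take a careful limit $t\to -1$ in Corollary \ref{WarCor}; in the latter the $a^{\ell(\lambda_{\text{o}})}$ must become the $2^{\ell(\nu)/2}$ after absorbing the $t\to-1$ limits of $\qb{m}{j}_t$. A secondary bookkeeping hazard is that the same product $\prod_i(1-u^2/(-q)^i)$ appears in both the numerator of the correction and (implicitly) in the denominator $1-x_i^2$ of the Warnaar piece, so I must be scrupulous that I have neither double-counted nor dropped it; verifying the final formula against small cases $n=1,2$ (where the $U(n,q)$ real degree sums are independently known, e.g.\ from \cite{Gow, Ohm81}) will serve as the consistency check, exactly as done after Theorem \ref{UnSumEven}.
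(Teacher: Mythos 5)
Your overall strategy for the first displayed identity is the paper's (peel off a Warnaar factor at $t=q^{-1}$ with $a=b=-q^{-1/2}$, then expand the leftover $i<j$ product at $t=-1$), but as written the proposal has concrete errors. First, you mis-record the generating function: with $e=2$, Theorem \ref{degreesU} gives the numerator $(1-u^2/(-q)^{i+j})^2$ in the $i<j$ product, and you dropped the square. Because of this, your proposed split (the ``Warnaar-type piece'' with denominators $1-u^2/(-q)^{i}$ and $1-u^2/(-q)^{i+j-1}$, times your correction factor) reconstructs the wrong function; moreover that piece is not of the shape of the right side of Theorem \ref{WarId} for any single substitution (matching $1-x_i^2=1-u^2/(-q)^i$ forces $x_i^2=u^2/(-q)^i$, which is incompatible with $(1+ax_i)^2=(1+u/(-q)^i)^2$ for a constant $a$), and it is inconsistent with the substitutions $x_i=-uq^{1/2}(-q)^{-i}$, $t=q^{-1}$, $a=b=-q^{-1/2}$ you invoke in step (1): those expand $\prod_i\frac{(1+u/(-q)^i)^2}{1+u^2/(-q)^{2i-1}}\prod_{i<j}\frac{1-u^2/(-q)^{i+j}}{1+u^2/(-q)^{i+j-1}}$. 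Dividing the \emph{correct} generating function by this factor leaves exactly $\prod_{i<j}\frac{1-u^2/(-q)^{i+j}}{1+u^2/(-q)^{i+j}}$ (the missing square is precisely what makes this clean), and this leftover is expanded not by a $t\to-1$ limit of Corollary \ref{WarCor} but by the Littlewood-type identity $\prod_{i<j}\frac{1-tx_ix_j}{1-x_ix_j}=\sum_{\nu'\text{ even}}c_\nu(t)P_\nu(x;t)$ from \cite[III.5, Example 3]{Mac}, applied with $x_i=\sqrt{-1}\,u(-q)^{-i}$ and $t=-1$, where $c_\nu(-1)=2^{\ell(\nu)/2}$; this is where the constraint $\nu'$ even and the power of $2$ actually come from. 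The leftover your (incorrect) split produces, essentially $\prod_{i<j}(1+u^2/(-q)^{i+j})^{-1}$ plus stray single-index factors, admits no such expansion.

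Second, your route to the second displayed expression fails: it relies on the claim that $H_m(1;t)$ vanishes unless $m$ is even, which is false --- $H_m(1;t)=\sum_{j=0}^m\qb{m}{j}_t$ is nonzero for every $m$; it is $H_m(-1;t)$ that vanishes for odd $m$. Consequently the second formula cannot be obtained by rewriting the first term-by-term. In the paper it comes from a genuinely different factorization of the same generating function: one factor $\prod_i\frac{1+u^2/(-q)^{2i}}{1+u^2/(-q)^{2i-1}}\prod_{i<j}\frac{1-u^2/(-q)^{i+j}}{1+u^2/(-q)^{i+j-1}}$, expanded by Theorem \ref{WarId} with $a=(-q)^{-1/2}$, $b=-(-q)^{-1/2}$ (so $b/a=-1$, and $H_m(-1;q^{-1})=(q^{-1};q^{-2})_{m/2}$ for $m$ even, $0$ for $m$ odd, enforcing $(\lambda_{\text{o}})'$ even), and a second factor $\prod_i\frac{(1+u/(-q)^i)^2}{1+u^2/(-q)^{2i}}\prod_{i<j}\frac{1-u^2/(-q)^{i+j}}{1+u^2/(-q)^{i+j}}$, expanded by Theorem \ref{WarId} with $a=b=-\sqrt{-1}$, $t=-1$, where the evaluations $H_m(\pm 1;-1)$ force $(\nu_{\text{e}})'$ even and yield $\prod_i 2^{\lceil m_i(\nu)/2\rceil}$. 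Without correcting the generating function, replacing your correction factor by the Littlewood identity at $t=-1$, and supplying this second factorization, the proposal does not establish either displayed formula.
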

\begin{proof}
By Theorem \ref{degreesU}, with $e=2$, the sum of the degrees of the real-valued characters of $U(n,q)$ is $(-1)^n (q^n - (-1)^n) \cdots (q+1)$ times the coefficient of $u^n$ in
\begin{align}
\prod_i & \frac{(1 + u/(-q)^i)^2}{1 + u^2/(-q)^{2i-1}} \prod_{i < j} \frac{(1 - u^2/(-q)^{i+j})^2}{(1 + u^2/(-q)^{i+j})(1 + u^2/(-q)^{i+j-1})} \notag\\
& = \left( \prod_i \frac{(1 + u/(-q)^i)^2}{1 + u^2/(-q)^{2i-1}} \prod_{i < j} \frac{1 - u^2/(-q)^{i+j}}{1 + u^2/(-q)^{i+j-1}} \right) \left( \prod_{i < j} \frac{1 - u^2/(-q)^{i+j}}{1 + u^2/(-q)^{i+j}} \right)  \label{1stExp}
\end{align}
From Theorem \ref{WarId}, with the substitutions
$$ a = b = -q^{-1/2}, \quad x_i = -u q^{1/2} (-q)^{-i}, \quad  t = q^{-1},$$
we have
\begin{align*}
\prod_{i} & \frac{(1 + ax_i)^2}{1 - x_i^2} \prod_{i < j} \frac{1 - tx_i x_j}{1 - x_i x_j} = \prod_i \frac{(1 + u/(-q)^i)^2}{1 + u^2/(-q)^{2i-1}} \prod_{i < j} \frac{1 - u^2/(-q)^{i+j}}{1 + u^2/(-q)^{i+j-1}} \\
& = \sum_{\lambda} (-1)^{\ell(\lambda_{\text{o}})} q^{-(\ell(\lambda_{\text{o}})+ |\lambda|)/2} h_{\lambda_{\text{e}}} (q^{-1} ; q^{-1}) h_{\lambda_{\text{o}}}(1 ; q^{-1})  \\
& \quad \quad \quad \quad \cdot P_{\lambda}(1, (-q)^{-1}, (-q)^{-2}, \ldots ; q^{-1}) u^{|\lambda|}.
\end{align*}
From \cite[III.5, Example 3]{Mac}, we have the identity
$$ \prod_{i < j} \frac{1 - tx_i x_j}{1 - x_i x_j} = \sum_{\nu \atop{ \nu' \text{ even}}} c_{\nu}(t) P_{\nu}(x ; t),$$
where $c_{\nu}(t) = \prod_{i \geq 1} (1 - t)(1-t^3) \cdots (1-t^{m_i(\nu) - 1})$.  We apply this identity with the substitutions $x_i = \sqrt{-1} u (-q)^{-i}$, $t = -1$.  When $\nu'$ is even, then $m_i(\nu)$ is even for every $i$, and then $c_{\nu}(-1) = \prod_i 2^{m_i(\nu)/2} = 2^{\ell(\nu)/2}$.  We then have
\begin{align*}
\prod_{i < j} & \frac{1 - u^2/(-q)^{i+j}}{1 + u^2/(-q)^{i+j}}  = \sum_{\nu \atop{ \nu' \text{ even}}} 2^{\ell(\nu)/2} P_{\nu} (\sqrt{-1} u (-q)^{-1}, \sqrt{-1} u (-q)^{-2}, \ldots;-1) \\
 & = \sum_{\nu \atop {\nu' \text{ even}}} 2^{\ell(\nu)/2} (-1)^{|\nu|/2} q^{-|\nu|}u^{|\nu|} P_{\nu}(1, (-q)^{-1}, (-q)^{-2}, \ldots;-1).
\end{align*}
Substituting the two expansions above back into \eqref{1stExp} and finding the coefficient of $u^n$ gives the first expression for the sum of the real character degrees.

On the other hand, we can also write
\begin{align}
\prod_i & \frac{(1 + u/(-q)^i)^2}{1 + u^2/(-q)^{2i-1}} \prod_{i < j} \frac{(1 - u^2/(-q)^{i+j})^2}{(1 + u^2/(-q)^{i+j})(1 + u^2/(-q)^{i+j-1})}  \notag\\
& = \left(\prod_{i} \frac{1 + u^2/(-q)^{2i}}{1+u^2/(-q)^{2i-1}} \prod_{i < j} \frac{1 - u^2/(-q)^{i+j}}{1 + u^2/(-q)^{i+j-1}}\right) \label{1stpair} \\
& \quad \quad \cdot \left( \prod_i \frac{(1 + u/(-q)^i)^2}{1 + u^2/(-q)^{2i}} \prod_{i < j} \frac{1 - u^2/(-q)^{i+j}}{1 + u^2/(-q)^{i+j}} \right) \label{2ndpair}
\end{align}
For the product in \eqref{1stpair}, apply Theorem \ref{WarId} with
$$a = (-q)^{-1/2}, \quad b = -(-q)^{-1/2}, \quad x_i = -u q^{1/2} (-q)^{-i}, \quad t = q^{-1},$$ and we obtain
\begin{align*}
\prod_{i} & \frac{1 + u^2/(-q)^{2i}}{1+u^2/(-q)^{2i-1}} \prod_{i < j} \frac{1 - u^2/(-q)^{i+j}}{1 + u^2/(-q)^{i+j-1}} \\
& = \sum_{\lambda} (-1)^{\ell(\lambda_{\text{o}})/2} q^{-(\ell(\lambda_{\text{o}}) + |\lambda|)/2} h_{\lambda_{\text{e}}} (q^{-1} ; q^{-1}) h_{\lambda_{\text{o}}}(-1 ; q^{-1}) \\
& \quad \quad \quad \quad \cdot P_{\lambda}(1, (-q)^{-1}, (-q)^{-2}, \ldots ; q^{-1}) u^{|\lambda|}.
\end{align*}
From \cite[Equation (3.3.8)]{A}, we have
$$H_m(-1 ; q^{-1}) = \left\{ \begin{array}{ll} (q^{-1} ; q^{-2})_{m/2} & \text{ if $m$ is even}, \\ 0 & \text{ if $m$ is odd.} \end{array} \right.$$
So, the only $\lambda$ which will appear in the expansion of \eqref{1stpair} above are those such that odd parts have even multiplicity, so $(\lambda_{\text{o}})'$ is even.  Thus the coefficient of $u^m$ in the expansion of \eqref{1stpair} is
\begin{align*}
\sum_{|\lambda| = m \atop{ (\lambda_{\text{o}})' \text{ even}}} & (-1)^{\ell(\lambda_{\text{o}})/2} q^{-(\ell(\lambda_{\text{o}}) + |\lambda|)/2} h_{\lambda_{\text{e}}} (q^{-1} ; q^{-1}) \left( \prod_i (q^{-1} ; q^{-2})_{m_i(\lambda_{\text{o}})/2} \right) \\
& \cdot P_{\lambda}(1, (-q)^{-1}, (-q)^{-2}, \ldots ; q^{-1}).
\end{align*}
For \eqref{2ndpair}, we apply Theorem \ref{WarId} with $a = b = -\sqrt{-1}$, $x_i = u \sqrt{-1}(-q)^{-i}$, $t = -1$, to obtain
\begin{align*}
\prod_i & \frac{(1 + u/(-q)^i)^2}{1 + u^2/(-q)^{2i}} \prod_{i < j} \frac{1 - u^2/(-q)^{i+j}}{1 + u^2/(-q)^{i+j}} \\
& = \sum_{\nu} (-1)^{3 \ell(\nu_{\text{o}})/2} (-1)^{|\nu|/2} (-q)^{-|\nu|} h_{\nu_{\text{e}}}( -1 ; -1) h_{\nu_{\text{o}}} (1 ; -1) \\
& \quad \quad \quad \quad \cdot P_{\nu} (1, (-q)^{-1}, (-q)^{-2}, \ldots;-1) u^{|\nu|}.
\end{align*}
One may compute directly from the recursion for $H_m(z ; t)$ (see \cite[Chapter 3, Example 6]{A}) that we have
$$ H_m(z ; -1) = \left\{ \begin{array}{ll} (z^2 + 1)^{m/2} & \text{ if $m$ is even,} \\ (z+1)(z^2 + 1)^{(m-1)/2} & \text{ if $m$ is odd.} \end{array} \right.$$
So, $H_m(1 ; -1) = 2^{\lceil m/2 \rceil}$, and when $m$ is even, $H_m(-1 ; -1) = H_m(1 ; -1)$.  Since $H_m(-1 ; -1) = 0$ when $m$ is odd, then the only $\nu$ we need consider are those such that all even parts have even multiplicity, that is, $(\nu_{\text{e}})'$ is even.  It follows now that the coefficient of $u^k$ in the expansion of \eqref{2ndpair} is given by
$$ \sum_{|\nu| = k \atop{ (\nu_{\text{e}})' \text{ even}}}  (-1)^{(\ell(\nu_{\text{o}}) + |\nu|)/2} q^{-|\nu|} \left( \prod_i 2^{\lceil m_i(\nu)/2 \rceil} \right) P_{\nu}(1, (-q)^{-1}, (-q)^{-2}, \ldots;-1).$$
Using the expansions for \eqref{1stpair} and \eqref{2ndpair} we have obtained gives the second desired expression for the real character degree sum.
\end{proof}

We note that we could now give results for $q$ odd which parallel Corollaries \ref{UnSumEven+-} and \ref{GenFnEvenAlt}, but we omit them here.  We conclude with an example of applying Theorem \ref{UnSumOdd} to $U(2,q)$ with $q$ odd.  We use the second expression in Theorem \ref{UnSumOdd}, which is a sum with three terms, corresponding to $\lambda = (2)$ with  $\nu = (0)$, $\lambda = (1^2)$ with  $\nu=(0)$, and $\lambda = (0)$ with $\nu = (1^2)$.  We can use the previously calculated values of Hall-Littlewood functions (recall that $P_{(1^m)}(x ; t)$ is independent of $t$), and together with the facts that $h_{(2)}(q^{-1} ; q^{-1}) = (q+1)/q$, and $(q^{-1} ; q^{-2})_1 = (q-1)/q$, we find that the sum of the degrees of the real-valued characters of $U(2,q)$, $q$ odd, is:
$$ (q^2 - 1)(q+1) \left( \frac{q^2+1}{q(q^2 - 1)} + \frac{1}{q(q+1)^2} + \frac{-2}{(q+1)(q^2 - 1)} \right) = q^2 +q.$$
From Proposition \ref{involUodd}, the sum of the degrees of characters with Frobenius-Schur indicator $1$, minus the degree sum of those with Frobenius-Schur indicator $-1$, is $q^2 - q + 2$.  This gives that the sum of the degrees of characters with Frobenius-Schur indicator $-1$ is $q-1$.  From the character degrees of $U(2,q)$, this is the minimal possible degree greater than $1$, which means there is a unique character of degree $q-1$ with Frobenius-Schur indicator $-1$.  This is known from the character table of $U(2,q)$, as mentioned at the end of the paper of Gow \cite{Gow}.  More generally, it is known \cite{SV} that $U(2m,q)$, $q$ odd, has $q^{m-1}$ semisimple characters with Frobenius-Schur indicator $-1$, which is exactly this character when $m=1$.

\section*{Acknowledgements} The authors thank Ole Warnaar for pointing out how his result from \cite{War} may be applied to expand the generating function in our Theorem \ref{degreesU}.  Fulman was supported by a Simons Foundation Fellowship and NSA grant H98230-13-1-0219, and Vinroot was supported by NSF grant DMS-0854849.

\end{document}